\documentclass[11pt]{amsart}

\usepackage[utf8]{inputenc}
\usepackage[text={6.1in,8.5in},centering]{geometry}
\usepackage{amssymb,amsmath,amsthm,bm,fdsymbol}
\usepackage{tikz}
\usepackage[square,numbers]{natbib}
\usepackage{placeins}
\usepackage{hyperref}
\usepackage{multirow}
\usepackage[table]{xcolor}
\usepackage{makecell}
\usepackage{xcolor}

\usepackage[table]{xcolor}
\usepackage{makecell}

\definecolor{cryst}{gray}{0.82}  
\definecolor{precr}{gray}{0.92}  

\newcommand{\crystcell}[1]{\cellcolor{cryst}\textbf{#1}}
\newcommand{\precrcell}[1]{\cellcolor{precr}{#1}}

\usetikzlibrary{patterns}
\usetikzlibrary{fadings}

\newtheorem{thm}{Theorem}[section]
\newtheorem*{thm*}{Theorem}
\newtheorem*{lem*}{Lemma}

\newtheorem{lem}[thm]{Lemma}
\newtheorem{prop}[thm]{Proposition}
\newtheorem*{prop*}{Proposition}
\newtheorem{cor}[thm]{Corollary}
\newtheorem*{conj}{Conjecture}

\theoremstyle{definition}
\newtheorem{defn}[thm]{Definition}
\newtheorem{rmk}[thm]{Remark}

\numberwithin{equation}{section}

\newcommand{\Pmin}{\mathcal{P}_{min}}      
\newcommand{\vmin}{\mathcal{V}_{min}}      

\newcommand{\polyscale}{.155\linewidth}

\title{Holey Hyperbolic Polyforms}

\author[S. Eldridge]{Summer Eldridge}
\address[S. Eldridge]{Department of Mathematics, Graduate Center, City University of New York.}
\email{seldridge@gradcenter.cuny.edu}

\author[A. Prabha]{Adithya Prabha}
\address[A. Prabha]{Basis Independent McLean, Virginia.}
\email{prabhaadithyasai@gmail.com}

\author[A. Roger]{Aiden Roger}
\address[A. Roger]{Department of Mathematics, George Mason University.}
\email{aroger29@gmu.edu}

\author[C. Roger]{Cooper Roger}
\address[C. Roger]{Department of Mathematics, George Mason University.}
\email{croger20@gmu.edu}

\author[E.~Rold\'an]{\'Erika Rold\'an}
\address[E.~Rold\'an]{Max Planck Institute for Mathematics in the Sciences, Leipzig; Department of Statistical Learning, ScaDS.AI, Leipzig University; and Universit\'e de Gen\`eve.}
\email{roldan@mis.mpg.de}

\author[R.~Toal\'a]{Rosemberg Toal\'a-Enr\'iquez}
\address[R.~Toal\'a-Enr\'iquez]{Departamento de Matemáticas y Física, Universidad Autónoma de Aguascalientes.}
\email{rosemberg.toala@edu.uaa.mx}

\begin{document}
\begin{abstract}
A polyform is a planar figure formed by gluing congruent regular polygons along entire edges.  
We study polyforms in hyperbolic $\{p,q\}$-tessellations and the extremal problem of minimizing the number of tiles needed to realize exactly $h$ holes.  
Denoting this minimum by $g_{p,q}(h)$, we establish general lower and upper bounds, compute exact values in several small cases, and give a sufficient structural condition for a polyform with $h$ holes to attain $g_{p,q}(h)$.
\end{abstract}

\subjclass[2020]{%
05A20, 
05B50, 
05C07, 
05C10, 
52C20, 
05C35, 
51M10, 
52A40, 
49Q10, 
}

\keywords{polyforms, hyperbolic tessellations, low-dimensional topology, extremal combinatorics, isoperimetric inequality, crystallized polyforms}

\maketitle

\section{Introduction}

Using Schläfli's notation, a $\{p,q\}$-tessellation is an infinite tiling of the hyperbolic plane by regular $p$-gons (tiles), with $q$ such polygons meeting at each vertex. A \emph{polyform} is a finite collection of tiles of a $\{p,q\}$-tessellation whose interior is (point-set) connected. Tiles in a polyform are considered adjacent when they share an edge; tiles may meet only at a vertex, provided the polyform's edge connectivity is maintained by other tiles. $|A|$ denotes the number of tiles of a polyform $A$. By Alexander Duality, a \emph{hole} of a polyform is a bounded connected component of its complement. 
For example, Figure~\ref{fig:examples1} shows polyforms with six and three holes, respectively.  
To make large hyperbolic polyforms visually tractable, all figures in this paper were generated with Hypertiling~\cite{hypertiling}, which renders geodesic edges as straight segments in the chosen model.

\begin{defn}
Let $h$ be a positive integer. For given $p,q,h$, we define
\[
g_{p,q}(h) := \min \{\, |A| : A \text{ is a $\{p,q\}$-polyform with $h$ holes} \,\},
\]
that is, $g_{p,q}(h)$ denotes the minimum number of tiles required for a $\{p,q\}$-polyform to contain $h$ holes.  
We call a $\{p,q\}$-polyform with $h$ holes and $g_{p,q}(h)$ tiles \emph{crystallized}.
\end{defn}

Previous work on extremal topology in polyforms has focused on the Euclidean setting. The $\{4,4\}$- and $\{3,6\}$-tessellations consist of squares and equilateral triangles, respectively, whose polyforms are known as polyominoes and polyiamonds.
 In \cite{KaR} and \cite{MaR}, the function $g_{4,4}(h)$ was completely determined, together with explicit constructions of polyominoes with $g_{4,4}(h)$ tiles and $h$ holes. An analogous study was carried out in \cite{malen2021extremalI, malen2021extremalII}, where a full description of $g_{3,6}(h)$ was obtained, along with the corresponding polyiamonds realizing these values.  

In this paper, we extend this line of research to the hyperbolic setting. We study $g_{p,q}(h)$ for hyperbolic polyforms, providing exact values for certain small choices of $p,q$ and $h$, establishing general bounds on $g_{p,q}(h)$, and proposing conjectures about its asymptotic behavior.

\begin{figure}[htbp]
    \centering
    \includegraphics[width=0.4\linewidth]{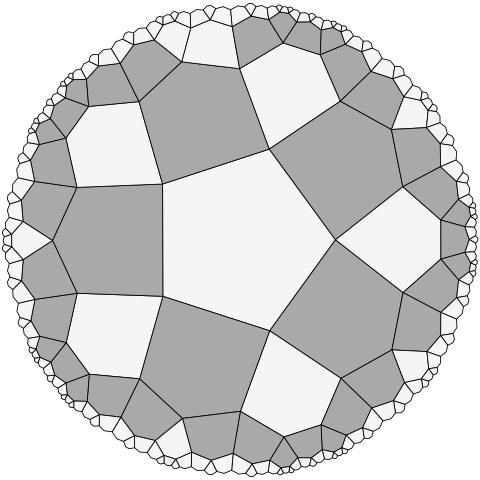}
    \includegraphics[width=0.4\linewidth]{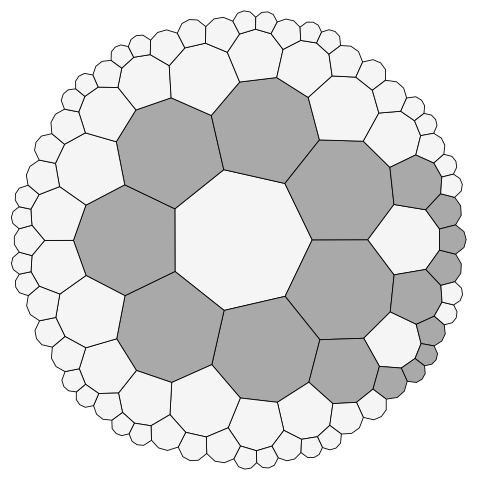}
    \caption{Examples of hyperbolic polyforms with multiple holes. 
Left: a $\{5,4\}$–polyform with six holes. 
Right: a $\{7,3\}$–polyform with three holes. 
Holes are defined as bounded connected components of the complement.}

    \label{fig:examples1}
\end{figure}

\subsection{Main Results}\label{sec:mainresults}

For the remainder of the paper, unless otherwise specified, we assume that $\{p,q\}$ corresponds to a hyperbolic tessellation, that is, $(p-2)(q-2) > 4$. 

Most of the functions and parameters introduced will depend on $p$ and $q$, but we omit the indices $p,q$ after their first appearance to simplify the notation. When necessary, we will explicitly indicate the dependence on $p,q$ to avoid ambiguity.

\begin{thm}\label{thm:table}
For every hyperbolic tessellation $\{p,q\}$, 
\[
g_{p,q}(1) = p+(p-1)(q-3) = pq-2p-q+3.
\]
Moreover, Table~\ref{table:g(h)vals} lists upper bounds on $g_{p,q}(h)$ for selected triples $(p,q,h)$; shaded entries are exact values.
\end{thm}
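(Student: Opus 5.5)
The plan is to prove matching upper and lower bounds for $g_{p,q}(1)$; the table entries are separate, being either explicit constructions (upper bounds) or computer-assisted exhaustive checks (shaded entries).

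\emph{Upper bound.} Fix a vertex $v$ of the tessellation and take the $q$ tiles incident to $v$ (the vertex star). Remove one of these tiles, say $T_0$. The remaining $q-1$ tiles form a chain around $v$ that is open at $T_0$. The idea is to close the ring around $T_0$ itself. Then $T_0$ becomes the hole, and the ring of tiles surrounding it is the polyform.

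\begin{itemize}
\item The tiles sharing an edge or a vertex with $T_0$ form the first ring around $T_0$. Each of the $p$ vertices of $T_0$ has $q-1$ other tiles incident to it.
\item Consecutive vertices of $T_0$ share exactly one such tile, namely the tile across their common edge. Counting the distinct tiles gives $p(q-1)-p = p(q-2)$ tiles in the full ring.
\item This ring is far more than needed. A hole only requires a closed cycle of edge-connected tiles surrounding $T_0$, in which consecutive tiles share an edge and the union separates $T_0$ from infinity.
\item The edge-neighbours of $T_0$ contribute $p$ tiles. Going around the vertex of $T_0$ between two consecutive edge-neighbours, we must connect them through edge-adjacencies. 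The tiles around that vertex, other than $T_0$, form a path of $q-1$ tiles whose endpoints are the two edge-neighbours, so $q-3$ extra tiles are needed at that vertex.
\item However, closing the ring does not require going around all $p$ vertices. The formula $p+(p-1)(q-3)$ says that one vertex can be skipped. The two edge-neighbours at that vertex meet only at the vertex itself. The interior of the union stays point-set connected through the rest of the ring, and the complement component containing $T_0$ is still bounded, because the shared vertex pinches it off.
\end{itemize}

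This gives $p+(p-1)(q-3)$ tiles, and I will verify that the resulting polyform has exactly one hole.

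\emph{Lower bound.} Let $A$ be a polyform with at least one hole, and let $H$ be a hole. Since the tessellation is vertex-transitive and $H$ is a union of tiles, $H$ contains at least one tile $T$. The key claim is that $A$ must contain tiles around the boundary of $H$, and the number needed is minimized when $H$ is a single tile. To make this rigorous, I would argue via the isoperimetric/Euler-characteristic structure.

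\begin{itemize}
\item Take the boundary cycle of $H$. Every boundary edge of $H$ is an edge of a tile of $A$.
\item At each boundary vertex of $H$ at which $A$ is edge-connected locally around the outside of $H$, the tiles of $A$ cover the $q-1$ (or more, if $H$ is bigger) other tile slots of that vertex. Otherwise the hole would leak through to the outside.
\item This leak can only be avoided at a vertex where the hole is pinched, and at most one such pinch can occur... In fact I expect multiple pinches are possible only if they create disconnection.
\end{itemize}

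I would handle the case of larger $H$ by arguing that replacing $H$ by a single boundary-adjacent tile does not increase the count. The cleaner alternative is an inductive shrinking argument: if $|H|\ge 2$, the boundary of $H$ has more vertices, and the hyperbolic condition $(p-2)(q-2)>4$ implies that the perimeter of tile-clusters grows, so the required ring is larger.

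The main obstacle is this lower bound for general hole shapes, together with controlling vertex-pinches. I would first try to prove that the tiles of $A$ adjacent (by edge or vertex) to $H$ number at least $p(q-2)-(q-3)$ when $|H|=1$. Then I would use a perimeter inequality for hyperbolic clusters, showing that a cluster of $n$ tiles has at least $(p-2)n+2$ boundary edges, to dominate the case $|H|\ge2$.
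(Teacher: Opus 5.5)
\paragraph{Upper bound.} Your construction is the paper's $B_{p,q}(1)$: the $p$ edge-neighbours of $T_0$, plus $q-3$ connectors at $p-1$ of its vertices, pinched at the last vertex. That half is fine.

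\paragraph{Lower bound.} This half has a genuine gap, and the tools you list would not close it.

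\emph{Large holes.} The perimeter inequality you plan to use for $|H|\ge 2$ is false. A cluster of $n$ tiles with $b$ shared edges has $pn-2b$ boundary edges, which equals $(p-2)n+2$ only when its dual graph is a tree. A vertex star of $q$ tiles has perimeter $(p-2)q$. More generally, $\Pmin(N)=(p-2)N+2-2\lfloor (N-2)/(q-2)\rfloor$ for $q\le N\le p(q-2)$, and $\Pmin(N)/N\to p-2-2/\beta$. Your alternative, that replacing $H$ by a single tile does not increase the count, is only asserted.

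\emph{Single-tile holes.} The local analysis for $|H|=1$ is also wrong as stated. A boundary vertex is sealed as soon as the two tiles of $A$ flanking $H$ there are present; $A$ need not fill the other slots. Several pinches can also coexist with connectivity, because the arcs of the ring can be rejoined by a detour through the second layer. Such a detour encloses the skipped connectors as a second hole. So your intermediate target (at least $p(q-2)-(q-3)$ tiles of $A$ touching $H$) is false for polyforms with \emph{at least} one hole, which is the setting you chose. For exactly one hole it would need a Jordan-curve argument that you have not supplied.

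\paragraph{The missing idea.} The paper uses a global double count, which makes the shape of the hole and all pinching irrelevant. For $A$ with $n$ tiles and $h$ holes, $pn=2b+p_o+p_h$, where:
\begin{itemize}
\item $b\ge n-1$, because the dual graph is connected;
\item $p_h\ge ph$, because each hole is bounded by at least $p$ edges;
\item $p_o=p_o(\overline{A})\ge\Pmin(n+h)$, because $|\overline{A}|\ge n+h$ and $\Pmin$ is nondecreasing (with a small correction for $p=3$, where it is not).
\end{itemize}
This gives $h\le M(n,h)=\bigl(n(p-2)+2-\Pmin(n+h)\bigr)/p$.

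At $n=pq-2p-q+2$ and $h=1$ you have $n+1=(p-1)(q-2)+1\in[q,p(q-2)]$. The explicit formula above then gives $M(n,1)=(p-2)/p<1$. Since the increments of $\Pmin$ are at most $p-2$, $M$ is nondecreasing in $n$, so every smaller $n$ is excluded as well.

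\paragraph{The table.} The same mechanism, not an exhaustive search, is what certifies the shaded entries. The paper's search is explicitly non-exhaustive. Exactness comes from checking $M(n-1,h)<h$ for each shaded $(p,q,h,n)$ and invoking the obstruction. Your proposal gives no lower-bound argument for these entries.
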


\begin{table}[h!]
    \centering
    \begin{tabular}{|c|c|c|c|c|c|c|c|c|}
    \hline
        $h$ & \{3,7\} &\{3,8\} & \{4,5\} & \{4,6\} & \{5,4\} & \{5,5\} & \{6,4\} & \{7,3\} \\
    \hline
    1   & \cellcolor{lightgray} \textbf{11}  & \cellcolor{lightgray} \textbf{13}      & \cellcolor{lightgray} \textbf{10}      & \cellcolor{lightgray} \textbf{13}      & \cellcolor{lightgray} \textbf{9}       & \cellcolor{lightgray} \textbf{13}      &  \cellcolor{lightgray} \textbf{11}  & \cellcolor{lightgray} \textbf{7} \\
    \hline
    2   & \cellcolor{lightgray} \textbf{18}  & \cellcolor{lightgray} \textbf{22}      & 17      & 23     & 15      & 23      & 19      & 12      \\
    \hline
    3   & \cellcolor{lightgray} \textbf{25}      & \cellcolor{lightgray} \textbf{31}      & 24      & 33      & 21      & 33      & 27      & 17      \\
    \hline
    4   & \cellcolor{lightgray} \textbf{31}      & \cellcolor{lightgray} \textbf{40}      & 31      & 42      & 27      &    43     &    35     &    22     \\
    \hline
    5   & 39      & \cellcolor{lightgray} \textbf{48}      & 38      &     52    & 33      &   53     &    43     &    26     \\
    \hline
    \end{tabular}
    \caption{Smallest known upper bounds of $g_{p,q}(h)$ obtained by explicit constructions,
for various $\{p,q\}$–tessellations and numbers of holes $h$.
Shaded entries correspond to crystallized polyforms.
}
\label{table:g(h)vals} 
\end{table}

To create this table, we found examples of holey polyforms using a program explained in Section \ref{section:table} and available in \cite{PolyformsGit}. The polyforms corresponding to shaded cells in Table~\ref{table:g(h)vals} satisfy the crystallization criterion of Theorem~\ref{thm:efficiently}; hence, those entries equal $g_{p,q}(h)$. Whether the unshaded cells represent crystallized polyforms remains an open question.

\begin{figure}[htbp]
    \centering
    
    \includegraphics[width=\polyscale]{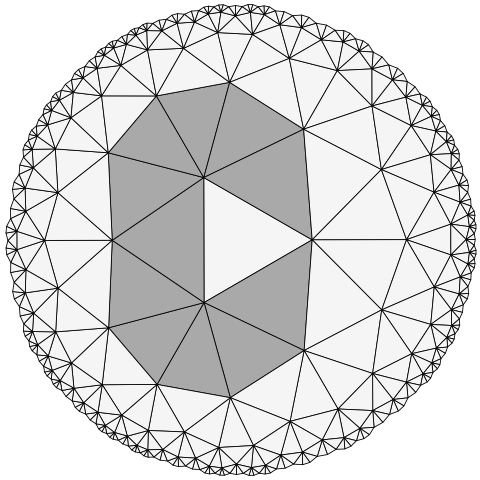}
    \includegraphics[width=\polyscale]{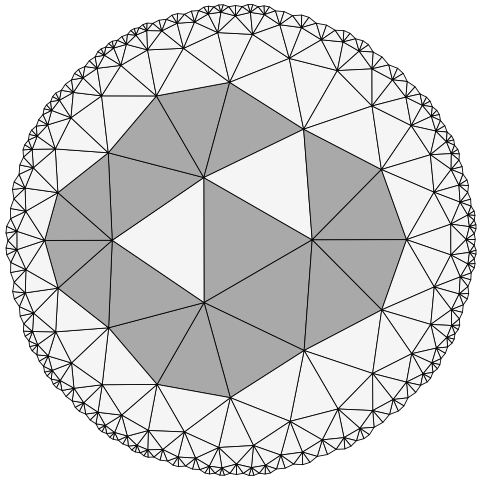}
    \includegraphics[width=\polyscale]{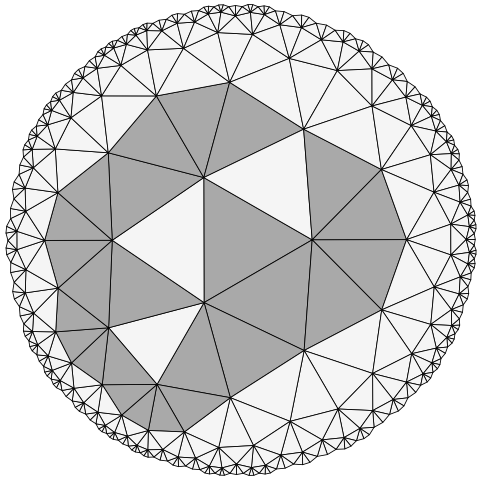}
    \includegraphics[width=\polyscale]{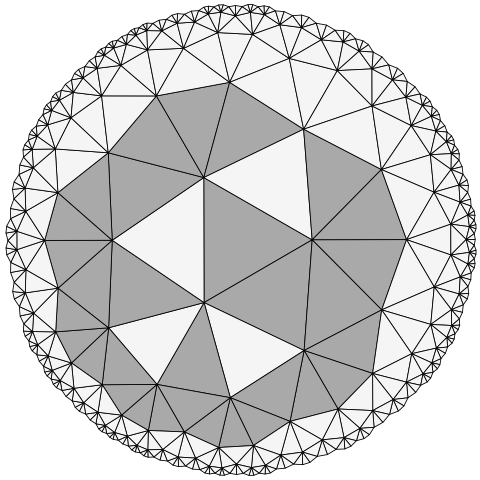}
    \includegraphics[width=\polyscale]{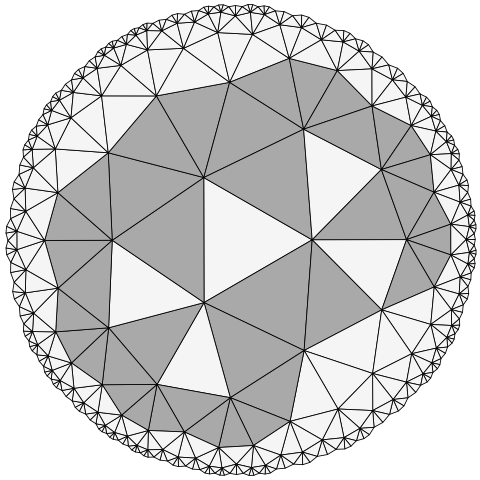}

    \includegraphics[width=\polyscale]{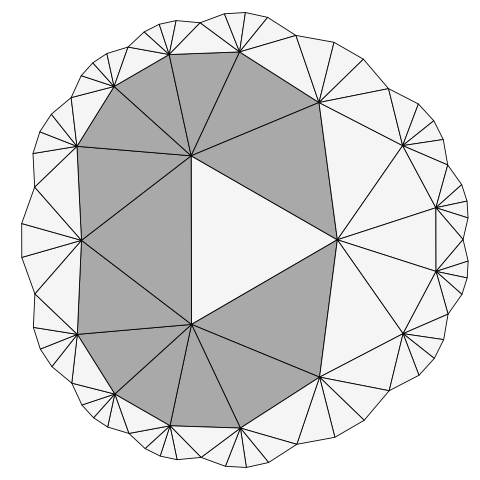}
    \includegraphics[width=\polyscale]{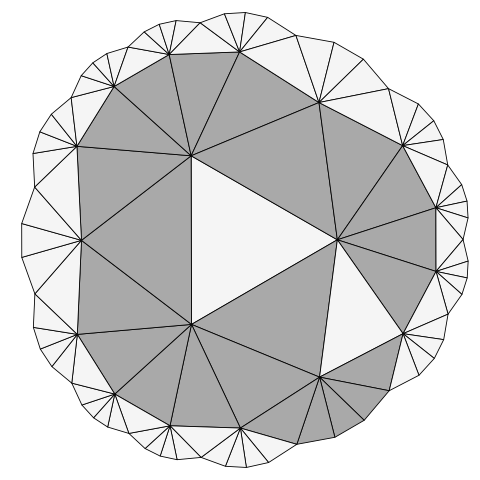}
    \includegraphics[width=\polyscale]{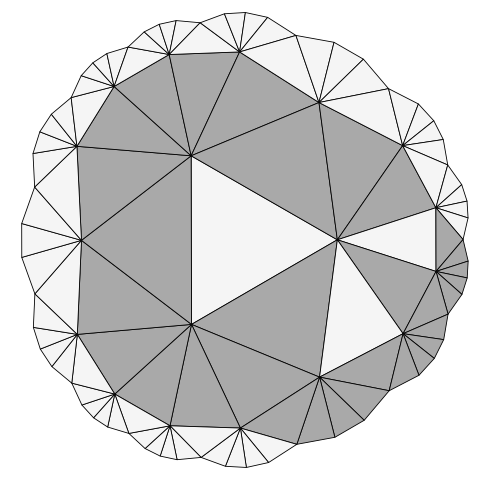}
    \includegraphics[width=\polyscale]{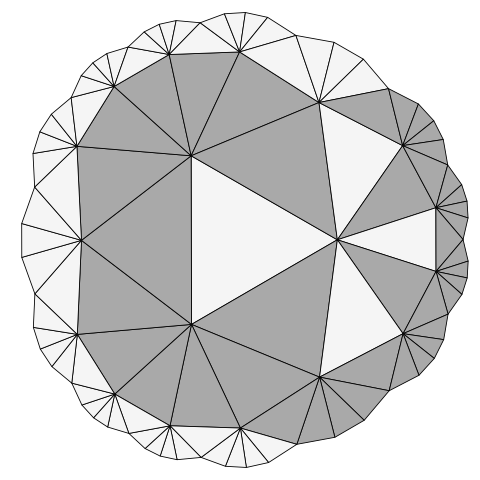}
    \includegraphics[width=\polyscale]{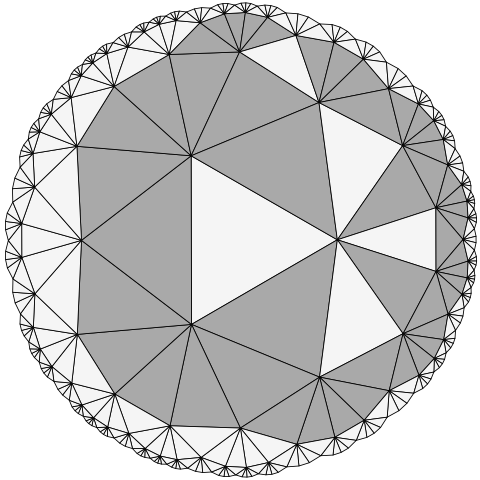}

    \includegraphics[width=\polyscale]{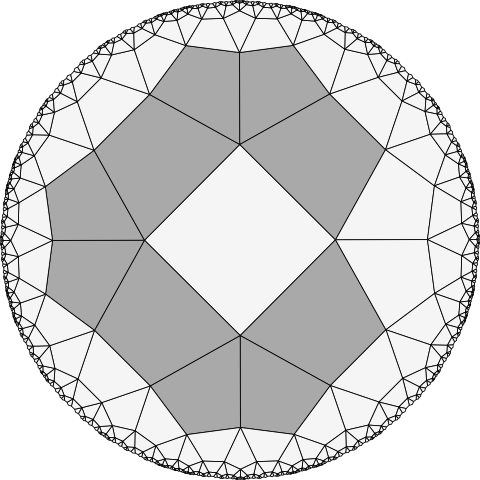}
    \includegraphics[width=\polyscale]{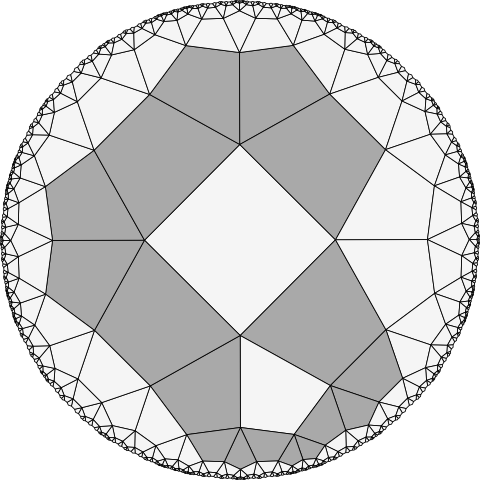}
    \includegraphics[width=\polyscale]{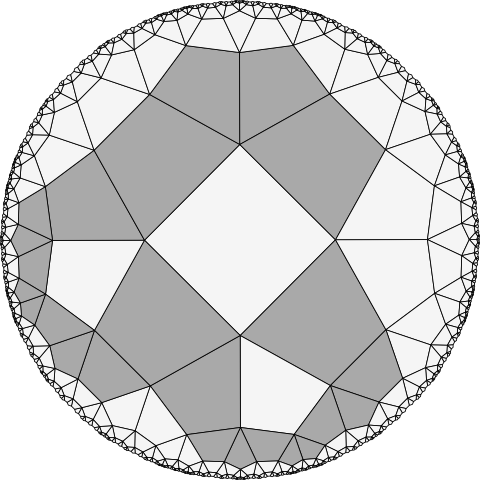}
    \includegraphics[width=\polyscale]{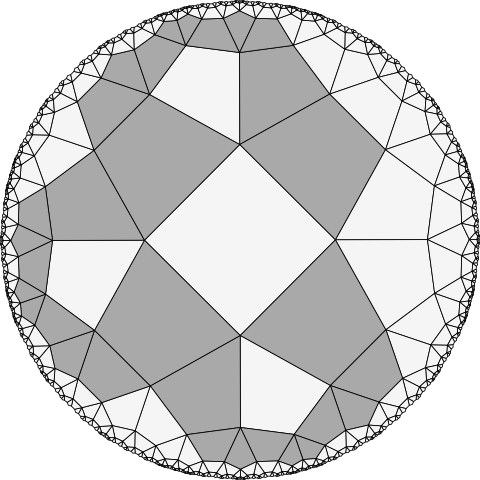}
    \includegraphics[width=\polyscale]{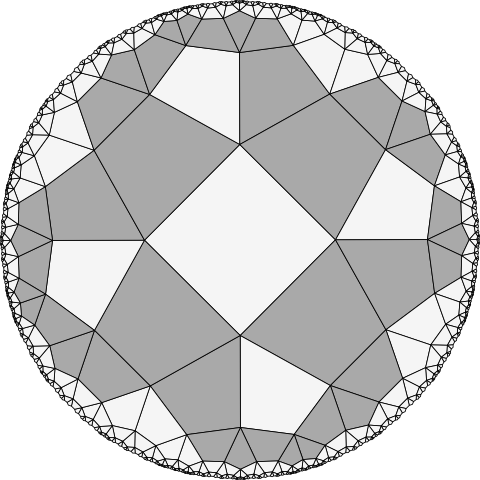}

    \includegraphics[width=\polyscale]{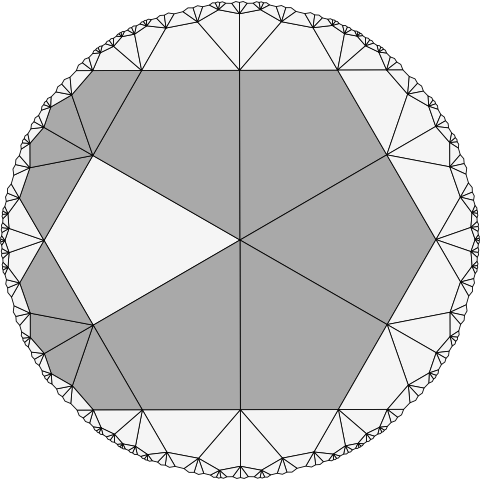}
    \includegraphics[width=\polyscale]{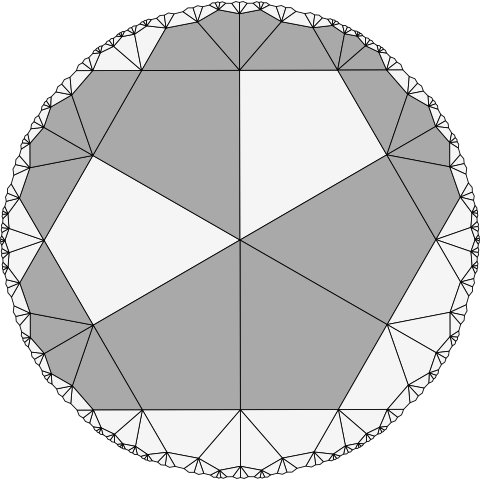}
    \includegraphics[width=\polyscale]{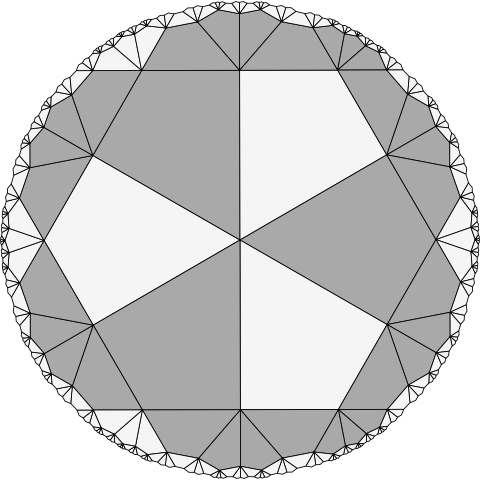}
    \includegraphics[width=\polyscale]{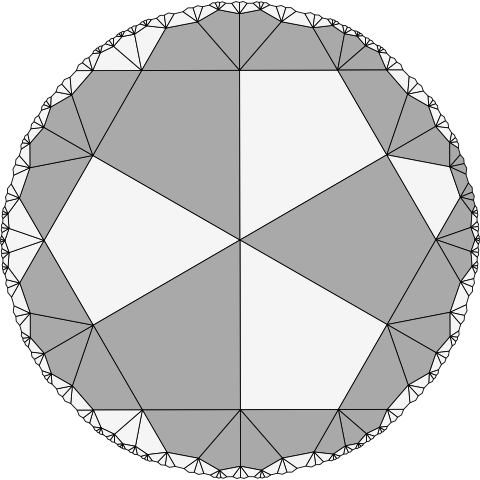}
    \includegraphics[width=\polyscale]{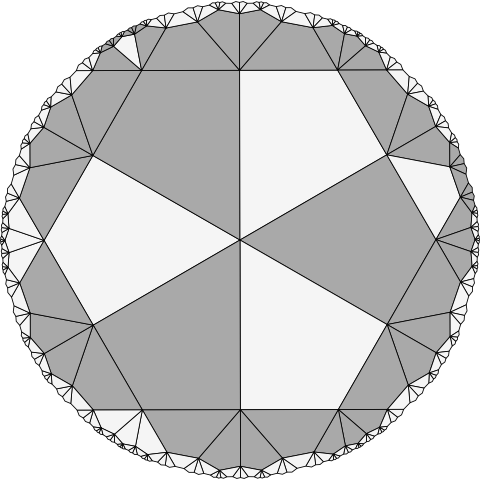}

    \includegraphics[width=\polyscale]{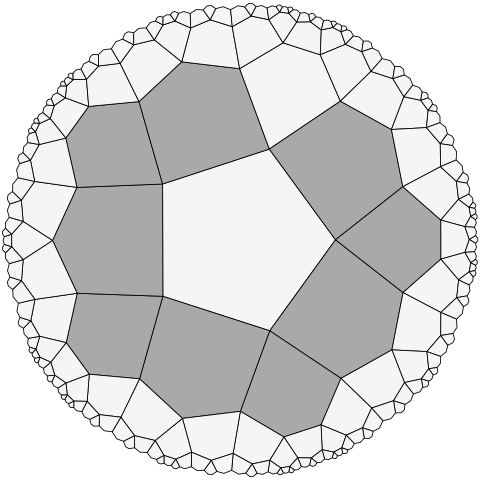}
    \includegraphics[width=\polyscale]{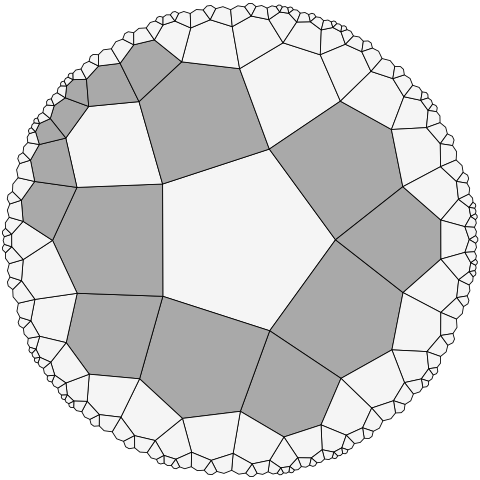}
    \includegraphics[width=\polyscale]{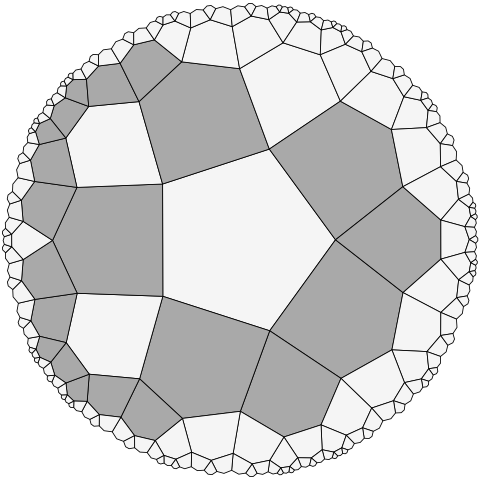}
    \includegraphics[width=\polyscale]{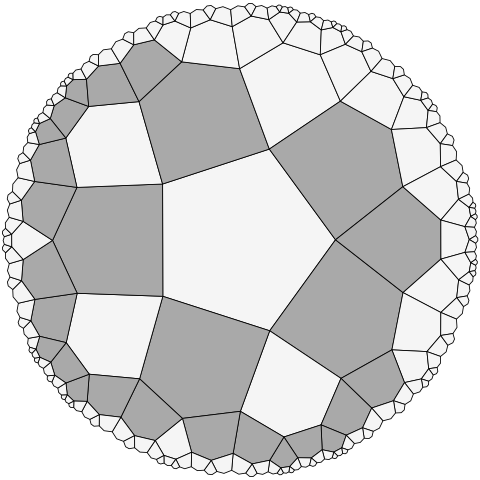}
    \includegraphics[width=\polyscale]{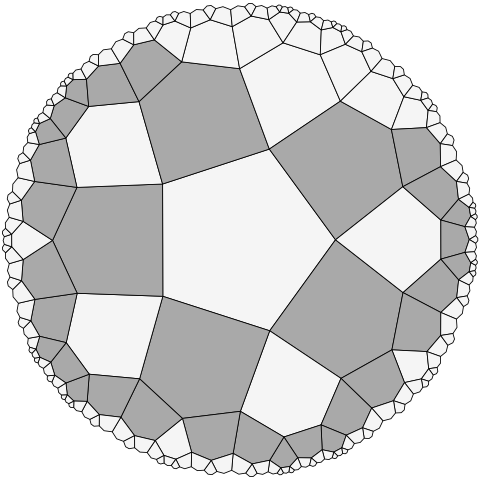}
    
    \includegraphics[width=\polyscale]{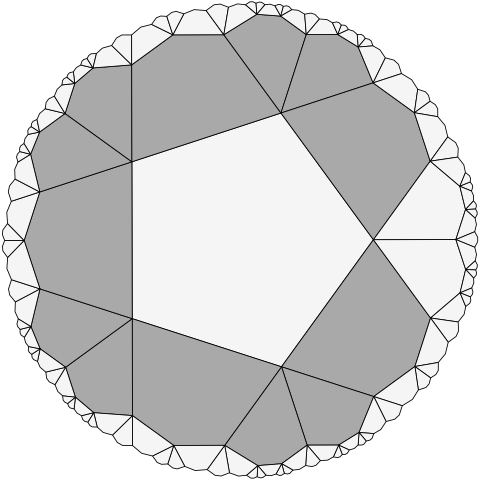}
    \includegraphics[width=\polyscale]{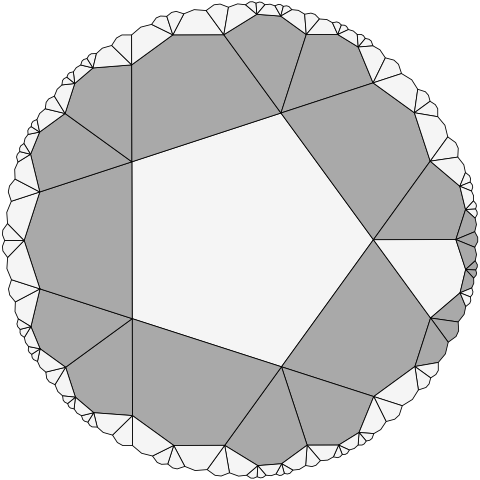}
    \includegraphics[width=\polyscale]{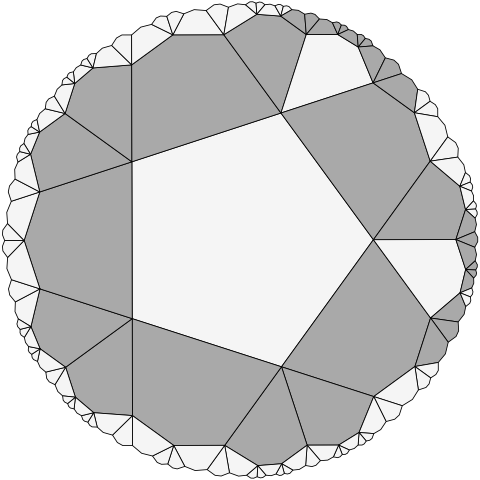}
    \includegraphics[width=\polyscale]{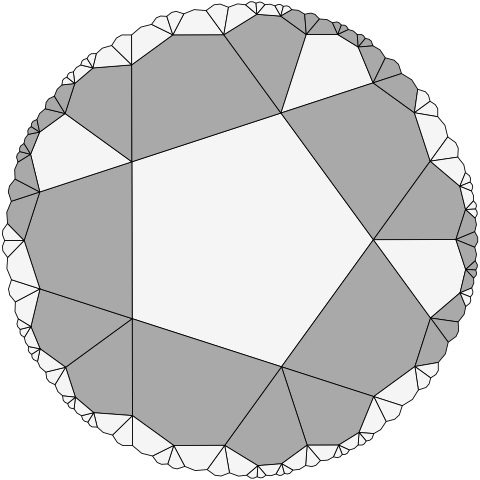}
    \includegraphics[width=\polyscale]{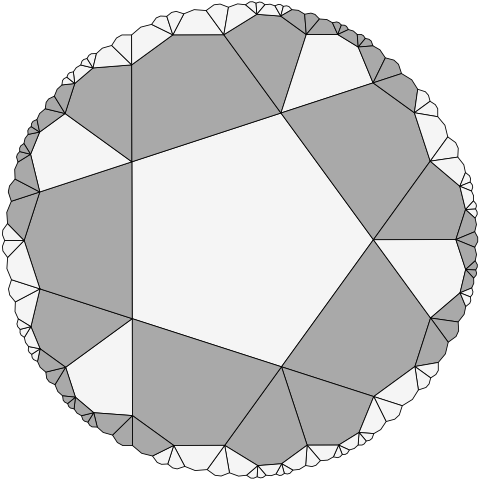}

    \includegraphics[width=\polyscale]{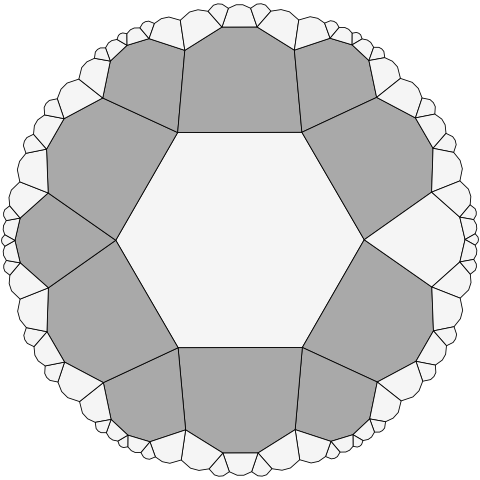}
    \includegraphics[width=\polyscale]{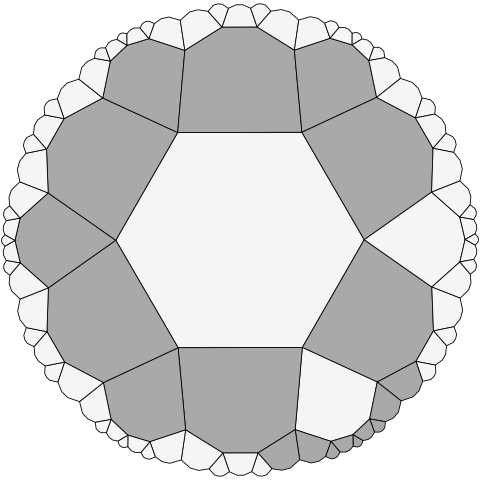}
    \includegraphics[width=\polyscale]{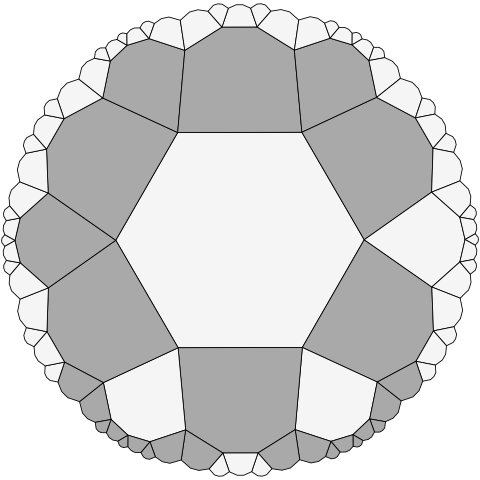}
    \includegraphics[width=\polyscale]{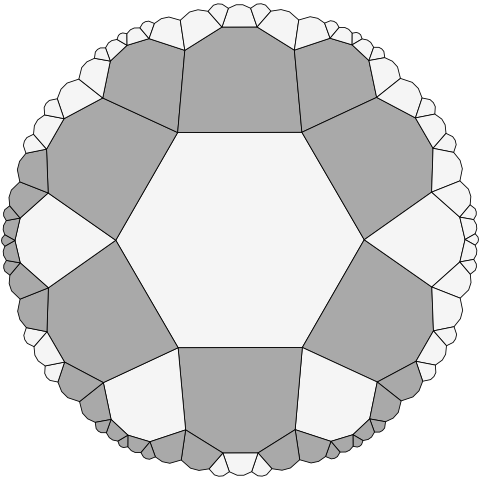}
    \includegraphics[width=\polyscale]{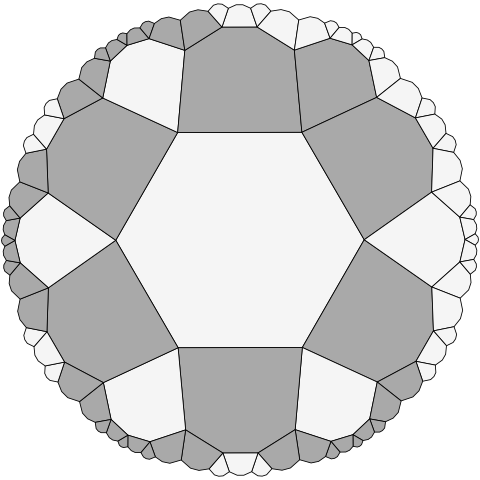}

    \includegraphics[width=\polyscale]{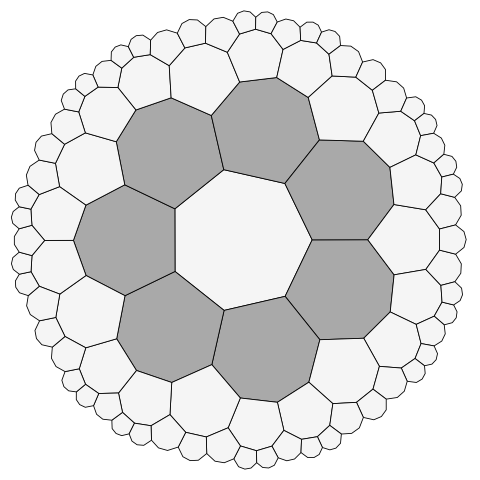}
    \includegraphics[width=\polyscale]{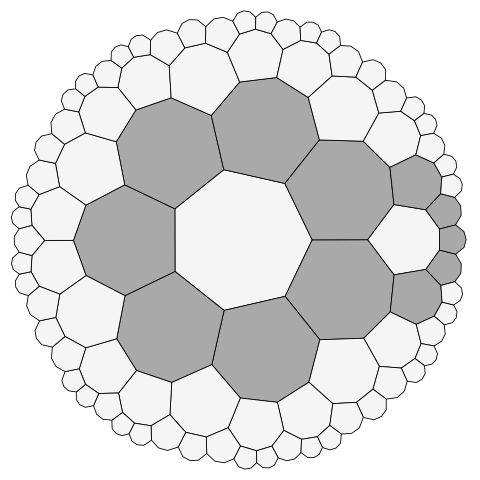}
    \includegraphics[width=\polyscale]{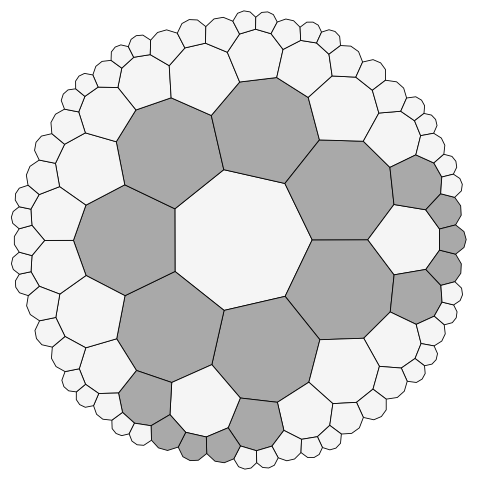}
    \includegraphics[width=\polyscale]{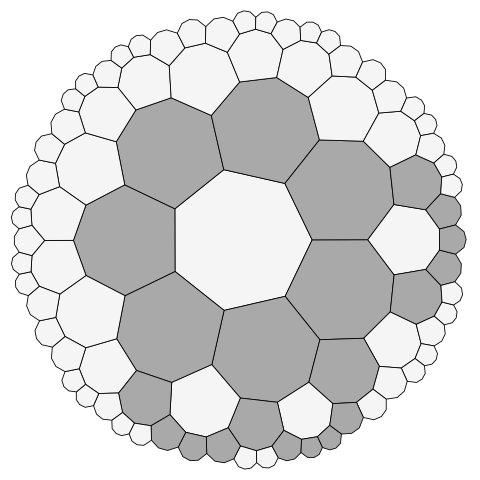}
    \includegraphics[width=\polyscale]{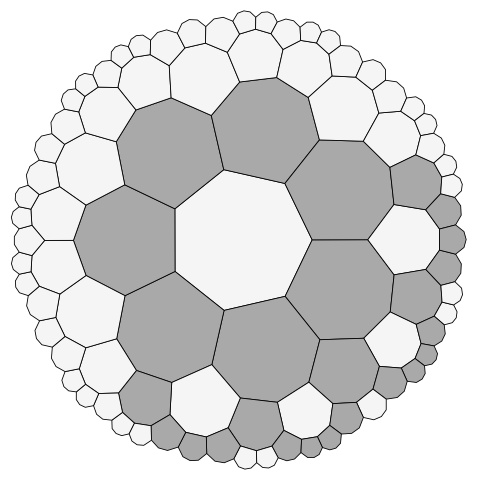} 

    \caption{Polyforms realizing the values in Table~\ref{table:g(h)vals},
arranged by tessellation (rows) and number of holes (columns).
} \label{fig:g(h)vals}
\end{figure}

We now introduce a sufficient condition for a polyform to be \emph{crystallized} (that is, to have $h$ holes and $g(h)$ tiles).

Intuitively, we seek to use tile edges as efficiently as possible to create holes. However, some edges must lie on the outer perimeter of the polyform (that is, edges adjacent to the unbounded region), while others must be shared between tiles to maintain connectivity. To formalize this intuition, we introduce the following concepts.

The \emph{dual graph} of a polyform $A$ is the graph $A'$ with one vertex for each tile of $A$, and with two vertices adjacent if and only if the corresponding tiles share an edge. We regard each hole of a polyform as a polyform in its own right, and define its \emph{area} as the number of tiles it contains. Let $H(A)$ denote the union of all hole regions of $A$, and let $\overline{A}$ be the polyform obtained by \emph{filling in} the holes of $A$, that is,
\[
\overline{A}=A\cup H(A).
\]

We call a polyform \emph{acyclic} if and only if its dual graph is a tree. The \emph{outer perimeter} of a polyform is the number of its edges that border the unbounded region. We say that a polyform is \emph{extremal} if it attains minimum perimeter among all polyforms with the same number of tiles.

\begin{defn}\label{def:eff}
Given a polyform $A$, we say that $A$ is \emph{efficiently structured} if and only if $A$ is acyclic, every hole in $A$ has area $1$ tile, and $\overline{A}$ is extremal. 
\end{defn}

To quantify efficient structure, we introduce the function $M(n,h)$, Equation \ref{eqn:M(n,h)} below, which measures the maximum number of holes achievable under perimeter constraints. This is explained in detail and proven in Lemmas \ref{lem:M(n,h)} and \ref{lem:equality}.

Let $\Pmin^{p,q}(N)$ denote the perimeter of an extremal polyform with $N$ tiles. As a generalization of the function of the same name from \cite{MaR}, we define $M_{p,q}(n,h)$ by:
     \begin{align}\label{eqn:M(n,h)}
         M_{p,q}(n,h) =  \frac{n(p-2)+2-\Pmin^{p,q}(n+h)}{p}.
     \end{align}

\begin{defn}\label{def:min-eff}
Fix $h\ge 1$. Let $A$ be a $\{p,q\}$-polyform with exactly $h$ holes, and set $n:=|A|$.
We say that $A$ is \emph{minimally efficiently structured (for $h$)} if
$A$ is efficiently structured and $n$ is the minimum integer such that $M(n,h)=h$.
\end{defn}

Note that this is strictly stronger than being efficiently structured: minimal efficient structure imposes, in addition, minimality of $n$ under the constraint $M(n,h)=h$. Thus, efficient structure is a structural condition, while minimal efficient structure is a fixed-$h$ extremal refinement.

\begin{thm} \label{thm:efficiently}
    If a polyform is minimally efficiently structured, then it is crystallized.
\end{thm}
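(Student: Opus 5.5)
The plan is to show that $M(n,h)$ bounds the number of holes of every polyform with $n$ tiles (in a form that also handles $h$ as a parameter), and that efficiently structured polyforms attain this bound. Crystallization then follows from minimality of $n$. This is the content of Lemmas~\ref{lem:M(n,h)} and~\ref{lem:equality}, which I take as available, and I would phrase the argument around them.

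\emph{Step 1 (general upper bound on holes).} Let $B$ be any $\{p,q\}$-polyform with $m$ tiles and $k$ holes, and let $T$ be the total number of tiles in its holes, so $T\ge k$. Count edge-incidences: $pm = 2e_{\mathrm{int}} + P_{\mathrm{out}}(B) + P_{\mathrm{hole}}(B)$. Here $e_{\mathrm{int}}\ge m-1$ by connectivity of the dual graph, $P_{\mathrm{out}}(B)=P(\overline B)\ge \Pmin(m+T)$, and each hole contributes at least $p$ hole-boundary edges, so $P_{\mathrm{hole}}\ge pk$. Combining these gives
\[
pk\le pm-2(m-1)-\Pmin(m+T)=m(p-2)+2-\Pmin(m+T).
\]
Since $\Pmin$ is nondecreasing in the number of tiles (this monotonicity needs checking, or an appeal to the paper's isoperimetric results) and $T\ge k$, we get $k\le M(m,k)$, which is Lemma~\ref{lem:M(n,h)}.

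\emph{Step 2 (equality for efficiently structured polyforms).} If $A$ is acyclic, then $e_{\mathrm{int}}=n-1$. If every hole has area $1$, then $T=h$ and each hole is bounded by exactly $p$ edges. If $\overline A$ is extremal, then $P_{\mathrm{out}}=\Pmin(n+h)$. All the inequalities in Step 1 are then equalities, so $M(n,h)=h$. This is Lemma~\ref{lem:equality}.

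\emph{Step 3 (conclusion).} Let $A$ be minimally efficiently structured with $n=|A|$ tiles and $h$ holes. Take any $B$ with $h$ holes and $m<n$ tiles; we want a contradiction. Step 1 gives $h\le M(m,h)$. We need to push this to $M(m',h)=h$ for some $m'\le m$, which would contradict the minimality of $n$. I would argue through monotonicity: $M(m,h)$ increases with $m$ by roughly $(p-2)/p$ minus the perimeter growth, and $M(m,h)$ takes values that are at most integral when they count holes. The cleanest route is to read Definition~\ref{def:min-eff} as saying that $n$ is least with $M(n,h)\ge h$, which is presumably how Lemma~\ref{lem:equality} is stated. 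Then $M(m,h)\ge h$ with $m<n$ contradicts minimality immediately, and so $g(h)=n$.

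The main obstacle is this last point. The definition says $M(n,h)=h$ rather than $M(n,h)\ge h$, so I must show that as $m$ increases, $M(m,h)$ cannot jump over $h$. Equivalently, if $M(m,h)\ge h$ for some $m$, then some $m'\le m$ has $M(m',h)=h$ exactly. This requires controlling the increments of $\Pmin$: increasing the tile count by one changes $n(p-2)-\Pmin(n+h)$ by a bounded amount. That is a discrete intermediate-value argument, which would rely on the known formula for $\Pmin$ in hyperbolic tessellations. If it fails, I would instead prove directly that $M(\cdot,h)\ge h$ at $m<n$ forces an efficiently structured configuration with fewer tiles.
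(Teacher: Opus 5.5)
\textbf{Gap: Step 3 is never completed, and the obstacle you name there does not exist.} Your Steps 1 and 2 are the paper's Lemmas~\ref{lem:M(n,h)} and~\ref{lem:equality}. Step 3 is the actual content of the theorem, and you leave it open in two ways. First, you replace Definition~\ref{def:min-eff} by ``$n$ least with $M(n,h)\ge h$'', which is not what is assumed. Second, you defer to a discrete intermediate-value argument that you do not carry out.

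The missing idea is to apply monotonicity of $M(\cdot,h)$ at the given $n$ (Lemma~\ref{lem:Mincreasing}). Monotonicity only needs $\Pmin(N+1)\le\Pmin(N)+p-2$, which you get by attaching one tile along an edge to an extremal $N$-tile polyform. Then:
\begin{itemize}
\item for every $m<n$, Step 2 gives $M(m,h)\le M(n,h)=h$;
\item minimality of $n$ gives $M(m,h)\neq h$, so $M(m,h)<h$;
\item Corollary~\ref{cor:M(n,h)} then rules out any polyform with $m$ tiles and $h$ holes, so $g(h)=n$.
\end{itemize}
``Jumping over $h$'' before $n$ would require some $m<n$ with $M(m,h)>h$. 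Monotonicity alone excludes this, because $M$ must equal $h$ at $n$. These same two lines are exactly what shows your reinterpreted definition is equivalent to the real one, so it cannot be assumed as a shortcut. Your intermediate-value route would in fact work for $q>3$: since $\Pmin(N)\equiv pN \pmod 2$, the numerator $m(p-2)+2-\Pmin(m+h)$ stays in $ph+2\mathbb{Z}$ and increases by $0$ or $2$ at each step. But it is unnecessary.

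\textbf{A smaller point about Step 1.} The monotonicity of $\Pmin$ that you flag as needing checking is false for $p=3$: the spiral's perimeter can drop by $1$ when a tile is added. This is why the paper proves Lemma~\ref{lem:M(n,h)} for $p=3$ separately. It uses $\Pmin^{3,q}(N)\le\Pmin^{3,q}(N+j)+1$, together with the extra slack $i\ge 1$ in the hole perimeter whenever $\overline{A}$ has more than $n+h$ tiles. Since you cite the lemma, this does not affect your Step 3. Note also that Step 3 needs monotonicity of $M$, i.e.\ $\Pmin(N+1)-\Pmin(N)\le p-2$, which holds for every $p$. That is a different statement from monotonicity of $\Pmin$ itself.
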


Theorem \ref{thm:efficiently} provides the only known criterion for crystallization. We use this result to establish Theorem \ref{thm:table} and to derive a lower bound for  $g(h)$. A similar strategy was employed in \cite{KaR,malen2021extremalI} where an infinite sequence of minimally efficiently structured polyforms with $n_k$ tiles and $h_k$ holes was constructed for the cases $\{4,4\}$ (polyominoes) and $\{3,6\}$ (polyiamonds), respectively. These constructions later served as benchmarks to determine $g(h)$ for all $h$. For the case $\{3,6\}$,  a polyform is crystallized if and only if it is minimally efficiently structured, see \cite[Theorem 1.4]{malen2021extremalII}. In contrast, for  $\{4,4\}$, additional checkerboard obstructions were required to fully characterize crystallized polyforms, see \cite{MaR}, particularly Theorem 8.  We anticipate that the hyperbolic cases will exhibit features of both scenarios; however, at present, we lack sufficient evidence to confirm this.

The function $\Pmin^{p,q}$ plays a central role in characterizing minimally efficient structured polyforms. In \cite{MaRoTo, isoperimetric}, a detailed analysis of the function $\Pmin^{p,q}$ for polyforms is given. In particular, the asymptotic behavior of $\Pmin^{p,q}(n)$ is fine-tuned by a parameter $\beta$, which satisfies the quadratic equation $(p-2)\beta^2 -(p-2)(q-2)\beta + (q-2) = 0$. It is explicitly defined as follows: 
    \begin{align}   
                \beta_{p,q} =\frac{(p-2)(q-2)+\sqrt{(p-2)^2(q-2)^2-4(p-2)(q-2)}}{2(p-2)}. \label{eqn:beta}
    \end{align} 
In \cite[Theorem 1]{isoperimetric} it was proven that
\begin{align} \label{eqn:isoperimetric}
    \lim_{n\to \infty} \frac{\Pmin^{p,q}(n)}{n} = p-2-\frac{2}{\beta}.
\end{align}

Along similar lines, in \cite{d2025minimal}, the authors study minimal finite subgraphs of hyperbolic tessellations, focusing on graphs with a fixed number of vertices rather than tiles. They provide an explicit characterization of such graphs that achieve minimal perimeter, thereby determining exact Cheeger constants for graphs with a finite vertex set. Although their approach differs from ours—since we work with tiles instead of vertices—the connection is straightforward: the dual of a polyform with $n$ tiles in a $\{p,q\}$-tessellation is a connected graph with $n$ vertices in the dual $\{q,p\}$-tessellation. Crucially, the perimeters of these dual objects coincide. Consequently, the Cheeger (or isoperimetric) constant for a $\{p,q\}$-tessellation, as given in \cite{haggstrom2002explicit} and confirmed in \cite{d2025minimal}, is:
\begin{align}
    (q-2)\sqrt{1-\frac{4}{(p-2)(q-2)}}
\end{align}
This expression matches Equation (\ref{eqn:isoperimetric}) when $p$ and $q$ are interchanged.

In our third main result, we combine the expression for $M(n,h)$, the structural criterion from Theorem~\ref{thm:efficiently}, and the hyperbolic isoperimetric bounds for $\Pmin^{p,q}$ to obtain a general lower bound for $g(h)$. 
The corresponding upper bound is established by constructing explicit families of $\{p,q\}$--polyforms that realize $h$ holes with controlled size. 
The complete argument is presented in Section~\ref{section:bounds}.

\begin{thm} \label{thm:bounds} 
For every hyperbolic tessellation $\{p,q\}$ with $q>3$ and every integer $h \ge 2$, 
\begin{align} \label{eqn:bounds1}
    \beta\!\left(p-1-\frac{1}{\beta}\right) h  
    \;\le\;
    g_{p,q}(h)
    \;\le\;
    ((p-1)(q-2)+1)\, h .
\end{align}
In particular, $g_{p,q}(h)$ grows linearly with $h$, and the asymptotic ratio $g_{p,q}(h)/h$ is constrained between two explicit constants depending only on $p$ and $q$.
\end{thm}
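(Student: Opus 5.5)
The proof splits into the two inequalities. For the lower bound I will derive a counting inequality from Euler's formula. For the upper bound I will give an explicit chain construction.

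\emph{Lower bound.} Let $A$ be a polyform with $n=|A|$ tiles and $h$ holes. Count edges: each tile has $p$ edges, each interior shared edge is counted twice, and boundary edges are either outer perimeter or bound a hole. Let $e_i$ be the number of shared edges. The dual graph $A'$ is connected, so $e_i\ge n-1$. Each hole is bounded by at least $p$ edges of $A$, since a hole contains at least one tile and the hole boundary edges are distinct from the outer perimeter. The outer perimeter of $A$ equals the perimeter of $\overline{A}$, which has $n+h'$ tiles, where $h'\ge h$ is the total hole area. Hence
\[
pn = 2e_i + P_{\mathrm{out}} + P_{\mathrm{holes}} \ge 2(n-1) + \Pmin(n+h) + ph .
\]
This is the inequality behind $M(n,h)\ge h$ (Lemma \ref{lem:M(n,h)}), and I will invoke that lemma rather than redo it. Monotonicity in the hole area needs care; I will use the fact that the inequality with $h'$ in place of $h$ is still at least as strong, or simply cite the lemma. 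Next I need a linear lower bound $\Pmin(N)\ge c N$ valid for all $N$, not only asymptotically. Here I will use the known fact from \cite{MaRoTo, isoperimetric} that $\Pmin(N)/N$ decreases to its limit, so that $\Pmin(N)\ge (p-2-\tfrac{2}{\beta})N$ for every $N$. Plugging this in gives
\[
n(p-2)+2 - (p-2-\tfrac{2}{\beta})(n+h) \ge ph,
\]
that is,
\[
\tfrac{2}{\beta} n + 2 \ge \left(p + p - 2 - \tfrac{2}{\beta}\right) h .
\]
I will then massage this into $n\ge \beta(p-1-\frac1\beta)h = (\beta(p-1)-1)h$, discarding the $+2$ by using $h\ge 2$ (and $q>3$ to control constants). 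This algebra needs checking against the stated constant. If it falls short, I will instead use the sharper hole-boundary count: with a tree dual and unit holes, hole edges are shared with the outer structure in a more refined way. I would try the lemma's exact form $M(n,h)$ first.

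\textbf{Main obstacle.} The hard step is the non-asymptotic inequality $\Pmin(N)\ge(p-2-2/\beta)N$ for all $N$. If the cited results only give the limit, I would prove it by induction on layers, using the recursive layer structure of $\{p,q\}$ balls where the ratio is monotone.

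\emph{Upper bound.} I will construct a polyform for each $h$. Take the crystallized one-hole polyform from Theorem \ref{thm:table}, which is a ring of $pq-2p-q+3$ tiles around a single tile. Consecutive rings can then be chained so that each new hole costs at most $(p-1)(q-2)+1$ additional tiles. Concretely, I surround a new hole tile adjacent to the previous ring, reusing the ring tiles that touch it; with $q>3$ at least one tile is shared. I then check that the vertex stars close up: around each vertex of the new hole tile, $q-1$ tiles are needed, and some are already present. Counting gives at most $(p-1)(q-2)+1$ new tiles per hole. The first ring costs $pq-2p-q+3\le (p-1)(q-2)+1$, so the total is at most $((p-1)(q-2)+1)h$. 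Finally, I will verify that the holes stay distinct and the complement gains no extra bounded components, using the tree-like (hyperbolic, non-recurring) geometry of the tessellation.
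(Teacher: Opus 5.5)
\textbf{Lower bound.} Your lower-bound computation falls short of the stated constant, and the repair you propose cannot work. With only $\Pmin(N)\ge(p-2-\frac{2}{\beta})N$, the inequality $ph\le \frac{2}{\beta}n+2-(p-2-\frac{2}{\beta})h$ rearranges to $n\ge \beta(p-1-\frac1\beta)h-\beta$. This loses an additive $\beta$, and $\lfloor\beta\rfloor=q-3$ for $p>3$. Neither $h\ge 2$ nor $q>3$ can remove an additive constant from a bound that is linear in $h$.

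The missing ingredient is an isoperimetric inequality \emph{with} an additive term. The paper uses $\Pmin(N)=(p-2-\frac2\beta)N+\epsilon(N)$ with $\epsilon(N)>2$ for $N>p(q-2)+1$ (Theorems 1 and 2 of the cited isoperimetric work). That $+2$ cancels the $+2$ in $M(n,h)$ exactly. The hypothesis $h\ge2$ is used only to put $n+h$ in that range. Lemma~\ref{lem:h2} substitutes the exact small-$N$ formula for $\Pmin$ into $M(n,h)$ and obtains $M(n,h)\le\frac{2p-(p-2)h}{p}<2\le h$ whenever $n+h\le p(q-2)+1$.

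Separately, the fact you plan to cite, that $\Pmin(N)/N$ decreases monotonically to its limit, is false. For $\{5,4\}$ the small-$N$ formula gives $\Pmin(6)=16$ and $\Pmin(7)=19$, and $19/7>16/6$; once the ratio drops below $p-2$, every increment of $p-2$ raises it. So an induction resting on a monotone ratio would fail. Even if the constant-free bound were proved, it is too weak for the theorem.

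\textbf{Upper bound.} The tile count is not the difficulty: $pq-2p-q+3=(p-1)(q-2)+1$, so each ring trivially costs at most that. The difficulty is showing that the chained polyform has exactly $h$ holes, and your sketch gives no mechanism for this. Placing the new hole tile merely ``adjacent to the previous ring'' and reusing shared tiles can fail in two ways. If the new hole tile shares a vertex with an earlier hole, that hole lies in a vertex star you propose to fill. Also, two rings can meet in several places and enclose extra bounded components of the complement. Appealing to ``tree-like'' geometry is not an argument, since the tessellation's dual graph has a cycle around every vertex.

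The paper supplies exactly this separation step. Using $q>3$ (interior angles at most $\pi/2$), it proves that geodesics through nonadjacent edges of a tile are ultraparallel (Lemmas~\ref{parallel quadrilaterals} and~\ref{parallel tile edges}). For $p=3$ it uses the analogue for the quadrilateral formed by two adjacent triangles (Lemma~\ref{parallel tile edgesp3}). It then finds two outer edges $l,m$ of $B_{p,q}(1)$ whose geodesics are ultraparallel, and builds $B_{p,q}(h)$ by repeatedly reflecting $B_{p,q}(1)$ across the geodesic through such an edge. Each copy lies in its own region bounded by ultraparallel geodesics and touches its neighbour only along the reflection edge. Hence there is no overlap, the union is connected, the holes are exactly the $h$ reflected copies of $X$, and the tile count is exactly $h((p-1)(q-2)+1)$, with no tile reuse needed.
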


\begin{rmk}
From \cite[Lemma 14]{isoperimetric} we know that:
\[
\lfloor \beta \rfloor = 
\begin{cases}
    q-4 , & \text{if } p=3, \\
    q-3 , & \text{if } p>3 .
\end{cases}
\]
This gives a clearer intuition for the sharpness of the bounds in \eqref{eqn:bounds1}. 
In particular, when $p,q>3$, Theorem~\ref{thm:bounds} yields
\[
   (p-1)(q-3)-1 
   \;\le\;
   \frac{g(h)}{h}
   \;\le\;
   (p-1)(q-2)+1,
\]
showing that the tile complexity per hole is tightly controlled in this range.
\end{rmk}

It is instructive to compare this result with the Euclidean cases. From \cite[Theorem 1.1]{malen2021extremalII} and \cite[Theorem 4]{MaR}, the following asymptotic behavior holds:
\begin{align} \label{eqn:g_euclid}
    g_{3,6}(h) = 3h + O(\sqrt{h}), \qquad
    g_{4,4}(h) = 2h + O(\sqrt{h}).
\end{align}

In the Euclidean setting, $\Pmin(N)$ grows as $O(\sqrt{N})$. Consequently, when solving the equation $M(n,h) = h$ for $n$, the leading term is $\frac{p}{p-2}h$, which matches the leading terms $3h$ and $2h$ for $p = 3$ and $p = 4$ in Equation (\ref{eqn:g_euclid}), respectively. The next order term is $O(\sqrt{h})$.

In contrast, for hyperbolic cases, $\Pmin(N)$ grows linearly as $O(N)$. Repeating the same procedure requires accounting for the linear contribution of $\Pmin(n+h)$. A detailed analysis then yields the lower bound stated in Theorem~\ref{thm:bounds}.

Our final result provides an upper bound for $g_{p,q}(h)$ when $p$ is even and $q=3$. The case $q=3$ is special because the tiles adjacent to a hole must also be adjacent to two other tiles adjacent to the hole (i.e., each hole is represented by a cycle in the dual graph). See Figure \ref{fig:examples1}. We use this property together with duality to reduce the problem of finding crystallized $\{2k,3\}$-polyforms with $h$ holes to finding extremal $\{2k,k\}$-polyforms with $h$ tiles. 

\begin{thm} \label{thm:q3}
For $k>3$ an integer, 
    \begin{align}
        g_{2k,3}(h) \leq  \frac{\Pmin^{2k,k}(h)+(2k-2)h}{2}+1.
    \end{align}    
\end{thm}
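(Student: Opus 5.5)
The idea is a direct construction that realizes the holes of a $\{2k,3\}$-polyform as the tiles of an extremal $\{2k,k\}$-polyform. The Euler characteristic then converts the vertex count of that polyform into the right-hand side of Theorem~\ref{thm:q3}.

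\textbf{Step 1: the combinatorial correspondence.} Color the tiles of the $\{2k,3\}$-tessellation as follows: a subset $\mathcal{H}$ of \emph{hole tiles}, and the rest. The pattern must satisfy two conditions:
\begin{itemize}
\item around every non-hole tile, its $2k$ neighbors alternate between hole and non-hole tiles;
\item at each vertex of the tessellation, exactly one hole tile and two non-hole tiles meet.
\end{itemize}
I would build this by declaring one tile a hole and propagating outward. Consistency of the propagation follows because $2k$ is even and the tessellation is simply connected: check it around a single vertex and a single tile, then use the fact that the symmetry group is generated locally.

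In this pattern each hole tile is surrounded by $2k$ non-hole tiles, and each non-hole tile touches $k$ hole tiles. Hence we get an isomorphism onto the $\{2k,k\}$-tessellation:
\begin{itemize}
\item hole tiles correspond to tiles of $\{2k,k\}$;
\item non-hole tiles correspond to vertices of $\{2k,k\}$;
\item two non-hole tiles sharing an edge (an edge not bordering a hole) correspond to an edge of $\{2k,k\}$.
\end{itemize}
This is the geometric form of the duality mentioned before the statement, and I expect it to be the main technical obstacle. In particular, one must verify that adjacency of non-hole tiles in $\{2k,3\}$ is exactly adjacency of vertices along edges of $\{2k,k\}$, with no extra adjacencies. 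The hypothesis $k>3$ ensures $\{2k,k\}$ is hyperbolic.

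\textbf{Step 2: the construction.} Let $B$ be an extremal $\{2k,k\}$-polyform with $h$ tiles. I want $B$ simply connected. Filling a hole of $B$ would give a polyform with more tiles and no larger perimeter, so I would instead argue from the monotonicity and structure results of \cite{isoperimetric,MaRoTo} that extremal polyforms can be chosen hole-free. Then let $A$ be the set of non-hole $\{2k,3\}$-tiles corresponding to the vertices of $B$.

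\textbf{Step 3: $A$ is a polyform with exactly $h$ holes.}
\begin{itemize}
\item $A$ is edge-connected because the $1$-skeleton of $B$ is connected.
\item Each tile $t$ of $B$ has all $2k$ of its vertices in $B$. So the corresponding hole tile is completely surrounded by tiles of $A$, and is a bounded complementary component of area $1$.
\item The remaining complement of $A$ consists of the hole tiles not in $B$ together with the non-hole tiles not in $B$. Since $B$ is simply connected, every such region reaches the unbounded part through tiles lying outside $B$. Thus no further bounded components appear, and $A$ has exactly $h$ holes.
\end{itemize}

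\textbf{Step 4: counting.} We have $|A|=V(B)$. Double counting edge incidences gives $2E(B)=2k\,h+\Pmin^{2k,k}(h)$, since interior edges are counted twice and perimeter edges once. Euler's formula for the simply connected planar complex $B$ gives $V-E+h=1$. Therefore
\[
|A| = 1+E-h = \frac{\Pmin^{2k,k}(h)+(2k-2)h}{2}+1,
\]
and hence
\[
g_{2k,3}(h)\le |A| = \frac{\Pmin^{2k,k}(h)+(2k-2)h}{2}+1.
\]
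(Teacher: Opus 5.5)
Your route is the paper's: puncture an extremal (e.g.\ spiral) $\{2k,k\}$-polyform $B$, take the $\{2k,3\}$-tiles dual to $V(B)$, and turn $v(B)$ into the right-hand side by Euler's formula (your Step 4 is exactly the paper's computation). Step 1 is not where the difficulty lies. Run it in the paper's direction: cone each $2k$-gon of $\{2k,k\}$ from its center, note that all triangles have angles $\pi/k$ so the result is $\{3,2k\}$, and dualize. Step 2 is also automatic: filling a hole of an extremal polyform strictly lowers its perimeter while adding tiles, which contradicts monotonicity of $\Pmin$ for $p>3$ (or simply take the spiral).

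The genuine gap is the third bullet of Step 3: simple connectivity of $B$ does not rule out extra holes in $A$. In the $\{2k,3\}$-tessellation, the complement of $A$ consists of all center tiles $c_t$ together with the tiles $x_v$ for $v\notin V(B)$. Each $c_t$ is edge-adjacent only to the $x_v$ with $v$ a vertex of $t$, and two center tiles never meet. So a complementary region around a tile $t\notin B$ can escape only through vertices of $\{2k,k\}$ untouched by $B$. It cannot escape through edges shared by tiles outside $B$, which is what your argument assumes.

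Concretely, take a large ball of $\{2k,k\}$-tiles and remove a one-tile-wide corridor $t_0,t_1,\dots,t_m$ that never turns around a vertex and runs from an interior tile $t_0$ to the boundary. The result $B$ is a simply connected polyform. But $k\ge 4$ tiles meet at each vertex, so every vertex of $t_0$ (and of the corridor tiles away from the mouth) lies on a tile of $B$. Hence $c_{t_0}$ and further center tiles are extra holes of $A$. Since an upper bound on $g_{2k,3}(h)$ needs a polyform with exactly $h$ holes, this step fails as written.

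The missing ingredient is an argument that uses extremality of $B$, or the explicit layered structure of the spiral. Here is one way:
\begin{itemize}
\item An extra bounded component gives a nonempty finite set $T$ of tiles outside $B$ such that $V(B\cup T)=V(B)\cup F$, where every $v\in F$ lies only on tiles of $T$.
\item Double counting gives $k|F|\le 2k|T|$, so $|F|\le 2|T|$.
\item Thus $B\cup T$ has $h+|T|$ tiles but at most $\vmin(h)+2|T|$ vertices.
\item On the other hand, every polyform $X$ satisfies $v(X)\ge \vmin(|X|)$, by Euler's formula with holes and monotonicity of $\Pmin$.
\item The jumps $p-2,p-4$ of $\Pmin$ give $\vmin(n+1)-\vmin(n)\in\{2k-2,2k-3\}$, and $2k-3>2$, a contradiction.
\end{itemize}
You must still make sure the enlarged set is edge-connected, e.g.\ by adding a few more tiles, since a tile of $T$ may touch $B$ only at vertices. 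Note also that the paper's Proposition~\ref{prop:Atilde} only observes that each tile center of $A$ becomes a hole, which shows at least $h$ holes. So this point is not settled in the paper's proof either.
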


Based on the structure of the examples constructed to obtain the bound of Theorem \ref{thm:q3}, it is intuitively clear that the inequality is sharp. We conjecture that this is indeed the case. 
Combining this with the asymptotic behavior of $\Pmin^{p,q}(n)$, we would get the asymptotic behavior of $g_{2k,3}(h)$. 

\begin{conj} \label{cor:lim-q3}
    \begin{align}
    \lim_{h\to \infty} \frac{g_{2k,3}(h)}{h} = k-1+ \sqrt{(k-1)^2-\frac{2k-2}{k-2}}.
    \end{align}
\end{conj}

The rest of the paper is structured as follows: in Section \ref{sec:props} we prove various properties of crystallized hyperbolic polyforms and of the function $M(n,h)$. In Section \ref{section:table} we prove Theorem \ref{thm:table} using the techniques previously developed and also describe the algorithm to generate Table \ref{table:g(h)vals} and Figure \ref{fig:g(h)vals}. In Section  \ref{section:bounds}, we establish theoretical bounds on $g(h)$ by constructing examples of holey polyforms and applying a bound based on the minimal perimeter of a polyform, thereby proving Theorem \ref{thm:bounds}. Finally, in Section \ref{section:q3} we study the case $p$ even and $q=3$ and prove Theorem \ref{thm:q3}. Appendix \ref{appendix} summarizes the notation used throughout this paper.

\section{Properties of Crystallized Polyforms} \label{sec:props}

In this section we develop some tools necessary to study crystallized polyforms and we prove Theorem \ref{thm:efficiently} giving a sufficient condition for a polyform to be crystallized. Along the way we prove general properties of crystallized polyforms.

Recall that 
\begin{align}
    M_{p,q}(n,h) =  \frac{n(p-2)+2-\Pmin^{p,q}(n+h)}{p}.
\end{align}

 As usual, we omit the subindices $p,q$ (when unambiguous) to streamline notation.
  \begin{lem} \label{lem:M(n,h)}
    For all $n,h\ge 1 $,  if there is a polyform with $n$ tiles and $h$ holes, then
    \begin{align}  \label{eq:M(n,h)}
        h\le M(n,h).
    \end{align} 
    \end{lem}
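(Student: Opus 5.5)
We plan to prove the inequality by double counting edges in $A$ and comparing with the filled-in polyform $\overline{A}$. Let $A$ have $n$ tiles and $h$ holes, and write $e_i$ for the number of edges shared by two tiles of $A$. Write $P_o$ for the number of edges on the outer perimeter and $P_h$ for the number of edges bordering holes. Every tile edge is either shared, on the outer boundary, or on a hole boundary, so
\[
pn = 2e_i + P_o + P_h .
\]
Since $\overline{A}$ has $|\overline{A}| \ge n+h$ tiles (each hole contains at least one tile), its perimeter is exactly $P_o$. Because $\Pmin$ is nondecreasing in the number of tiles, we get $P_o \ge \Pmin(|\overline{A}|) \ge \Pmin(n+h)$. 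Monotonicity of $\Pmin$ in the hyperbolic setting should follow from \cite{MaRoTo, isoperimetric}; if it is not directly available, we will instead use $\Pmin(|\overline{A}|)$ and argue separately.

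Next we bound $P_h$ from below. Each hole is a region made of at least one tile, so its boundary has at least $p$ edges, giving $P_h \ge ph$.

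It remains to bound $e_i$ from below. The dual graph $A'$ is connected, so $e_i \ge n-1$. This is too weak, however. The bound we actually need relates cycles of $A'$ to holes. An Euler characteristic count on $A$ viewed as a planar cell complex gives $V - E + n = 1 - h$. We then want to rewrite this so that the outer perimeter and hole perimeters appear.

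A cleaner route is to target the inequality $ph \le n(p-2) + 2 - P_o$ directly, i.e.\ $ph + P_o \le (p-2)n + 2$. Substituting $P_o = pn - 2e_i - P_h$, this becomes
\[
P_h - ph \ge 2e_i - 2n + 2 .
\]
We therefore split the argument according to the cycle structure of $A'$. We will apply Euler's formula to the planar dual-type graph formed by tiles and shared edges, together with the hole faces and the outer face, and track vertices with a tile-degree count: each vertex of the tessellation inside $A$ has some number of incident tiles, at most $q$. The goal is to show that every independent cycle of $A'$ beyond those forced by holes costs extra hole perimeter, namely $P_h - ph \ge 2(e_i - (n-1))$.

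Concretely, $e_i - (n-1)$ is the cycle rank of $A'$. Each cycle of $A'$ encloses either a hole or a vertex of the tessellation whose $q$ surrounding tiles all lie in $A$. Vertex-enclosed cycles can be absorbed because they only occur where the polyform is locally filled. We then need to check that a single hole of area $a$ with boundary length $\ell$ contributes at most $(\ell - p)/2$ independent cycles beyond the basic one, roughly via the hole's own extremal perimeter.

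This cycle-versus-hole-perimeter accounting is the main obstacle, and it is also where equality should force acyclicity and area-$1$ holes, matching Definition~\ref{def:eff}. As a first attempt we would handle it by Euler's formula for the planar complex $\overline{A}$, writing $V$ in terms of the vertex degrees. If that fails, we would induct by removing tiles from cycles.
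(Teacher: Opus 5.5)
There is a genuine gap, and it comes from a sign error. Your setup (the count $pn=2e_i+P_o+P_h$ together with $P_o\ge\Pmin(n+h)$, $P_h\ge ph$ and $e_i\ge n-1$) is exactly the paper's. But substituting $P_o=pn-2e_i-P_h$ into $ph+P_o\le (p-2)n+2$ gives $ph-P_h\le 2e_i-2n+2$, that is,
\[
P_h-ph\;\ge\;-2\bigl(e_i-(n-1)\bigr),
\]
not $P_h-ph\ge 2\bigl(e_i-(n-1)\bigr)$. So the bound $e_i\ge n-1$ is not too weak; it enters with the favourable sign. The bounds you already collected close the argument in one line:
\[
ph\le P_h=pn-2e_i-P_o\le pn-2(n-1)-\Pmin(n+h)\quad(\text{for } p>3),
\]
which is the paper's proof. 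The inequality you instead propose to prove by cycle accounting is false, so that program cannot succeed. For example, $A_{p,q}(3)$ with its central tile removed has a single hole of area $1$, so $P_h-ph=0$. Yet its dual graph has a cycle around the hole and a further independent cycle around every vertex on the boundary of $A_{p,q}(2)$, so $2\bigl(e_i-(n-1)\bigr)>0$. Vertex-enclosed cycles cost no hole perimeter at all, so they cannot be ``absorbed'' into $P_h-ph$.

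The second gap is the case $p=3$, which you defer. There $\Pmin^{3,q}$ is not monotone (consecutive values differ by $\pm1$), so $P_o\ge\Pmin(|\overline{A}|)$ does not yield $P_o\ge\Pmin(n+h)$. The paper repairs this in two steps:
\begin{itemize}
\item Decrements of $\Pmin^{3,q}$ are isolated, since each is followed by at least $q-4$ increments. This gives $\Pmin^{3,q}(N)\le\Pmin^{3,q}(N+j)+1$ for all $j\ge1$.
\item If $|\overline{A}|>n+h$, then some hole has area at least $2$, hence at least $4$ boundary edges, so $P_h\ge 3h+1$.
\end{itemize}
That extra unit of hole perimeter pays exactly for the possible drop of $1$ in $\Pmin$. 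For $p>3$ your appeal to monotonicity is fine: the increments of $\Pmin^{p,q}$ are $p-2$ or $p-4$ when $q>3$, and $p-4$ or $p-6$ with $p\ge7$ when $q=3$.
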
   
    \begin{proof}
The proof of this lemma follows the same logic as the derivation of $M_{4,4}(n,h)$ by Kahle and Rold\'an in \cite{KaR}, generalized to apply to an arbitrary hyperbolic $\{p, q\}$-tessellation. In their paper, for a $\{4,4\}$-polyform $A$, they define $p_o(A)$ to be the outer perimeter of $A$, $p_h(A)$ to be the hole perimeter of $A$ (that is, the number of edges in $A$ that border a bounded region of the complement of $A$), and $b(A)$ to be the number of edges in the dual graph of $A$. We define these similarly, except that $A$ can be any hyperbolic $\{p,q\}$-polyform. They also note that if $A$ has $n$ tiles and $h$ holes, the following hold:
    \begin{align}
         b(A) &\geq n-1 , \label{eq:ba} \\
         p_o(A) &= p_o(\overline{A}) \ge \Pmin^{p,q}(|\overline{A}|) \geq  \Pmin^{p,q}(n+h) , \textrm{ for } p>3. \label{eq:po}
    \end{align}
 The internal edges of $A$ correspond precisely to the edges of the graph $A'$. Since $A'$ is a connected graph with $n$ vertices, it has at least $n-1$ edges, which yields the first inequality.

The outer perimeter of $A$ coincides with the entire perimeter of the polyform $\overline{A}$, which contains at least $n+h$ tiles. Recalling that $\Pmin$ is non-decreasing for $p>3$, we obtain the second inequality.

Finally, for any tessellation $\{p,q\}$, observe that
\[
pn = 2b(A) + p_o(A) + p_h(A),
\]
since each tile has $p$ edges, and each edge either joins two tiles of $A$ or lies on its perimeter. Consequently,
\[
p_h(A) = pn - 2b(A) - p_o(A).
\]
Moreover, each hole contains at least one tile, and each tile has $p$ edges, so

\begin{align}
    h(A) \leq \frac{p_h(A)}{p} = \frac{pn-2b(A)-p_o(A)}{p}.
\end{align}
Using Equations \ref{eq:ba} and \ref{eq:po}, this leads to the inequality for $p>3$:
\begin{align}
    h(A) \leq \frac{pn-2(n-1)-\Pmin^{p,q}(|A|)}{p} \le \frac{n(p-2)+2-\Pmin^{p,q}(n+h)}{p}=M(n,h).
\end{align}
The case $p=3$ requires some adjustments regarding the bound on $p_o(A)$. The following argument, taken from \cite{MaR}, is adapted here to ensure it applies to the hyperbolic setting. We show first that $\Pmin^{3,q}(N)\le\Pmin^{3,q}(N+j)+1$ for all $N,j\ge 1$. Recall that the function $\Pmin^{3,q}(N)$ gives the perimeter of the spiral $\{3,q\}$-polyform with $N$ tiles. When a new tile is added to the spiral, the perimeter decreases by 1 or increases by 1, that is, $\Pmin^{3,q}(N)=\Pmin^{3,q}(N+1)\pm 1$. More importantly, a decrease by 1 is followed by at least $q-4$ increments by 1 (See \cite{MaRoTo}, Theorem 1.5 and the description of the sequences $d_{k,i}$ given below Definition 3.6). In particular, if $\Pmin^{3,q}(N) = \Pmin^{3,q}(N+1)+1$, then $\Pmin^{3,q}(N+1) = \Pmin^{3,q}(N+2)-1$. Therefore, $\Pmin^{3,q}(N) \le \Pmin^{3,q}(N+2)$ for all $N\ge1$. An induction argument on $j$ then shows $\Pmin^{3,q}(N)\le \Pmin^{3,q}(N+j)+1$ for all $N,j\ge 1$.  

Let $n+h+j=|\overline{A}|$, for some $j\ge0$. Then, $p_o(A)=p_o(\overline{A})\ge \Pmin^{3,q}(n+h+j)$. We still have that $b(A) \geq n-1$ and $p_h(A) = 3n-2b(A)-p_o(A)$, therefore,
\begin{align}
   p_h(A) & \le 3n-2(n-1)-\Pmin^{3,q}(n+h+j).
\end{align}
Holes in $\{3,q\}$-polyforms have at least 3 edges; so we can define $i=i(A)\ge 0$ such that $p_h(A)=3h+i$. If $j=i=0$ we get the desired result. Otherwise, $j>0$ implies $i>0$, and therefore:
\begin{align*}
   3h+i &\le 3n-2(n-1)-\Pmin^{3,q}(n+h+j), \\
   3h & \le  n+2-\Pmin^{3,q}(n+h+j)- i, \\
   3h & \le  n+2-\Pmin^{3,q}(n+h+j)- 1, \\
   3h & \le  n+2-\Pmin^{3,q}(n+h).
\end{align*}
This completes the proof.
    \end{proof}

\begin{cor} \label{cor:M(n,h)}
    For positive integers $h$ and $n$, if $h>M(n,h)$, then there is no polyform with $n$ tiles and $h$ holes. 
\end{cor}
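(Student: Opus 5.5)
This is the contrapositive of Lemma~\ref{lem:M(n,h)}, so the plan is a short logical inversion rather than a new argument. Fix positive integers $n$ and $h$ with $h > M(n,h)$, and suppose, for contradiction, that some $\{p,q\}$-polyform $A$ has $|A| = n$ and exactly $h$ holes.

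Lemma~\ref{lem:M(n,h)} applies verbatim to such an $A$, for every $n,h\ge 1$ and every hyperbolic $\{p,q\}$. For $p>3$ it rests on monotonicity of $\Pmin$; for $p=3$ it uses the spiral argument showing $\Pmin^{3,q}(N)\le \Pmin^{3,q}(N+j)+1$, together with the slack $i\ge 1$ whenever $j\ge 1$. The lemma gives $h \le M(n,h)$, which contradicts the hypothesis $h > M(n,h)$. Hence no such polyform exists.

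The only point worth checking is that the quantity $M(n,h)$ in the corollary is exactly the one bounded in the lemma. It should be evaluated at the prescribed $n$ and $h$, with $\Pmin^{p,q}(n+h)$ and not $\Pmin$ of the filled polyform $\overline{A}$. This is indeed the form in which Lemma~\ref{lem:M(n,h)} is stated: the dependence on $|\overline{A}|$ was already absorbed into $\Pmin^{p,q}(n+h)$ inside its proof, for $p>3$ through monotonicity and for $p=3$ through the $+1$ correction. So there is no genuine obstacle, and the corollary follows immediately.
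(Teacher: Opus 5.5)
Your proposal is correct and matches the paper, which states this corollary without a separate proof because it is the immediate contrapositive of Lemma~\ref{lem:M(n,h)}. Your remarks on the $p>3$ and $p=3$ cases only recap the lemma's own proof and are not needed for the corollary.
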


\subsection{Efficient Structure}

In this section, we follow the ideas from \cite{MaR} to prove that a minimally efficient structured polyform is crystallized, when $q>3$. Note that for $q=3$ the dual graph $A'$ must have cycles, hence $A$ cannot be efficiently structured. 

    \begin{lem}  \label{lem:equality} Let $A$ be a polyform in a $\{p,q\}$ tessellation. If $A$ is efficiently structured, then $h = M(n,h)$.
    \end{lem}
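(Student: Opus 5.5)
The plan is to rerun the chain of inequalities from the proof of Lemma~\ref{lem:M(n,h)} and check that efficient structure turns every inequality into an equality. Let $A$ have $n$ tiles and $h$ holes. Write $b(A)$ for the number of dual edges, $p_o(A)$ for the outer perimeter and $p_h(A)$ for the hole perimeter, as in that proof. The starting point is the edge count
\[
pn = 2b(A) + p_o(A) + p_h(A).
\]
Once each term is computed exactly, this identity rearranges to
\[
ph = n(p-2)+2-\Pmin(n+h),
\]
which is the claim $h=M(n,h)$.

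\textbf{Step 1 (acyclicity).} Since $A$ is acyclic, the dual graph $A'$ is a tree on $n$ vertices. Hence $b(A)=n-1$ exactly.

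\textbf{Step 2 (holes of area one).} Each hole is a single tile $T$ of the tessellation that does not belong to $A$. I will argue that every edge of $T$ is shared with a tile of $A$. Each edge of $T$ borders either $A$ or another tile of the complement. In the second case that neighbouring tile, being edge-adjacent to $T$, lies in the same bounded component of the complement, so the hole would have area at least $2$, a contradiction. Distinct holes have disjoint boundaries, and an edge of $A$ bordering a hole cannot also border the unbounded region. It follows that $p_h(A)=ph$.

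\textbf{Step 3 (extremality of $\overline{A}$).} Since each hole has area $1$, we have $|\overline{A}|=n+h$. The outer perimeter of $A$ equals the full perimeter of $\overline{A}$, because filling single-tile holes removes exactly the hole boundaries. As $\overline{A}$ is extremal, $p_o(A)=\Pmin(n+h)$. Unlike in Lemma~\ref{lem:M(n,h)}, no monotonicity of $\Pmin$ is needed here, so the argument works for $p=3$ as well.

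\textbf{Step 4 (substitution).} Substituting Steps 1--3 into the edge count gives
\[
pn = 2(n-1) + \Pmin(n+h) + ph,
\]
which rearranges to $h=M(n,h)$.

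I expect Step 2 to be the only delicate point: it must be justified that the perimeter edges of $A$ split cleanly into outer edges and hole edges, with each one-tile hole contributing exactly $p$ edges. The key fact is that a one-tile hole is surrounded on all $p$ sides by tiles of $A$. Vertex-only contacts between tiles do not affect edge counts, so they cause no trouble.
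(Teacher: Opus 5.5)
Your proposal is correct and follows the paper's proof: the same edge count $pn = 2b(A)+p_o(A)+p_h(A)$ with $b(A)=n-1$, $p_h(A)=ph$ and $p_o(A)=\Pmin(n+h)$ substituted and rearranged. Your extra justification that a one-tile hole is bordered on all $p$ sides by tiles of $A$, and that $|\overline{A}|=n+h$, fills in details the paper leaves implicit.
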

    \begin{proof}
    For all polyforms, regardless of efficient structure, we have 
    $$pn = 2b(A) + p_0(A) + p_h(A).$$ 
    When $A$ is efficiently structured, by definition its dual graph is acyclic, every hole in \( A \) is minimal, and its outer perimeter is minimal. This translates to:
    \begin{itemize}
        \item $b(A) = n-1$, because the dual graph is a tree with $n$ vertices, therefore it has $n-1$ edges.
        \item $p_h(A) = ph$, because a minimal hole has area $1$ tile and therefore $p$ adjacent edges.
        \item $p_o(A) = \Pmin^{p,q}(n+h)$, because $\overline{A}$ is a polyform of minimal perimeter with $n+h$ tiles.
    \end{itemize}
    Plugging these into $pn = 2b(A) + p_0(A) + p_h(A)$, we get:
    \begin{align*}
     pn = 2(n - 1) + \Pmin^{p,q}(n + h) + ph.
    \end{align*}
Rearranging,
    \begin{align*}h = \frac{n(p - 2) + 2 -   \Pmin^{p,q}(n + h)}{p} = M(n, h).
    \end{align*}
    \end{proof}
    \begin{lem} \label{lem:Mincreasing} $M(n,h)$ is nondecreasing in $n$.
    \end{lem}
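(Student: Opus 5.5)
The plan is to reduce the monotonicity of $M(n,h)$ in $n$ to a one-step growth bound for the minimal perimeter. Fix $h$. From the definition \eqref{eqn:M(n,h)},
\[
p\bigl(M(n+1,h)-M(n,h)\bigr) = (p-2) - \bigl(\Pmin(n+1+h)-\Pmin(n+h)\bigr).
\]
So it suffices to prove the inequality
\[
\Pmin(N+1)\le \Pmin(N)+p-2 \qquad\text{for all } N\ge 1,
\]
and then apply it with $N=n+h$.

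To prove this inequality, take an extremal polyform $B$ with $N$ tiles, so its perimeter is $\Pmin(N)$. Since $B$ is finite, it has at least one edge $e$ on its outer boundary, adjacent to the unbounded region. Let $t$ be the tile of the tessellation on the other side of $e$; it is not in $B$. Set $B^+=B\cup\{t\}$. Then $B^+$ is a polyform with $N+1$ tiles: its interior is connected because $t$ shares the full edge $e$ with $B$.

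Now count perimeter. Suppose $t$ shares $s\ge 1$ edges with tiles of $B$. Each of these $s$ edges stops being a perimeter edge of $B$. The remaining $p-s$ edges of $t$ become new perimeter edges. Hence the perimeter of $B^+$ is
\[
\Pmin(N) - s + (p-s) = \Pmin(N)+p-2s \le \Pmin(N)+p-2.
\]
Since $\Pmin(N+1)$ is a minimum over all polyforms with $N+1$ tiles, we get $\Pmin(N+1)\le \Pmin(N)+p-2$, as required.

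The main point needing care is what "perimeter" means once $t$ is added. Adding $t$ might enclose a new bounded region, and then the outer perimeter would differ from the total count of non-shared edges. I would handle this by using the total perimeter convention, under which the edge count above is exact regardless of holes. I would also note that extremal polyforms can be taken hole-free, since filling a hole strictly decreases perimeter. Even if only outer perimeter is being tracked, enclosing a region only removes edges from the outer boundary, so the outer perimeter of $B^+$ is still at most $\Pmin(N)+p-2$. In either reading, the bound $\Pmin(N+1)\le\Pmin(N)+p-2$ holds, and this gives $M(n+1,h)\ge M(n,h)$.
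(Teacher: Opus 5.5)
Your proof is correct, and it makes the same reduction as the paper: monotonicity of $M(\cdot,h)$ is equivalent to the one-step bound $\Pmin(N+1)-\Pmin(N)\le p-2$ at $N=n+h$. The difference is in how you get that bound.

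The paper cites \cite[Corollary 7.1]{MaRoTo}, which gives the exact jumps of $\Pmin$: $p-2$ or $p-4$ for $q>3$, and $p-4$ or $p-6$ for $q=3$, $N>1$. You instead prove the needed one-sided inequality directly, by attaching a tile across an outer-boundary edge of an extremal $N$-tile polyform.

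Your argument is self-contained, uses nothing about spiral polyforms, and works for every $\{p,q\}$. It also works under either the total- or outer-perimeter reading of $\Pmin$. In the outer-perimeter case the one fact you use implicitly is that every edge $t$ shares with $B$ lies on the outer boundary of $B$. That holds because $t$ sits in the unbounded component of the complement of $B$.

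The cited result gives more than this lemma needs. It pins the increments of $M$ to exactly $0$, $2/p$, or $4/p$. Its description of when the increment is $0$ (a spiral tile attached along a single edge) is what the Remark closing Section~\ref{sec:props} uses to show that $M(n,h)$ stays constant for at most $q-2$ consecutive values of $n$. Your upper bound alone cannot give that.

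One small point: your aside that extremal polyforms can be taken hole-free ``since filling a hole strictly decreases perimeter'' does not follow as stated, because filling a hole changes the number of tiles. It is also unnecessary. Your edge count is exact under the total-perimeter convention and an upper bound under the outer-perimeter one, whether or not $B$ or $B\cup\{t\}$ has holes.
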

    \begin{proof}
    We have,
    \begin{align*}
        M(n+1, h) - M(n, h) = \frac{p - 2 - \left(\Pmin^{p,q}(n+1+h) - \Pmin^{p,q}(n+h)\right)}{p}.
    \end{align*}
    In \cite[Corollary 7.1]{MaRoTo}, Malen, Rold\'an, and Toala characterized the jumps of the function $\Pmin^{p,q}$. They found that :
\begin{align*}
&\Pmin^{p,q}(N+1)-\Pmin^{p,q}(N) = p-2 \text{ or $p-4$, when $q>3$.}  \\[4pt]
&\Pmin^{p,3}(N+1)-\Pmin^{p,3}(N)= p-4 \text{ or $p-6$, for $N>1$.}
\end{align*}   
    Thus, the expression $M(n+1, h) - M(n, h)$ is equal to either $0$, $\frac{2}{p}$, or $\frac{4}{p}$. It follows that $M(n,h)$ is nondecreasing. 
    \end{proof}
    
Let $A$ be a $\{p,q\}$-polyform with $n$ tiles and $h$ holes. Recall that Definition \ref{def:min-eff} states that $A$ is minimally efficiently structured if and only if $A$ is efficiently structured and $n$ is the least integer for which $M(n,h) = h$.
    
Note that $M(n,h)$ is nondecreasing in $n$ by Lemma \ref{lem:Mincreasing} and $A$ being efficiently structured implies $M(n,h) = h$ by Lemma \ref{lem:equality}. Therefore, it can be verified that a polyform $A$ is minimally efficiently structured by checking that $A$ is efficiently structured and $M(n-1,h) < h$. 
    
    \begin{thm*} [Theorem \ref{thm:efficiently}] \label{lem:g1-} Let \( A \) be a polyform in a \(\{p,q\}\)-tessellation. If \( A \) is minimally efficiently structured, then \( A \) is crystallized. 
    \end{thm*}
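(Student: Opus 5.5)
The plan is to argue by contradiction, using Corollary \ref{cor:M(n,h)} together with the monotonicity in Lemma \ref{lem:Mincreasing}. Let $A$ be minimally efficiently structured with $n=|A|$ tiles and $h$ holes. By Lemma \ref{lem:equality} we have $M(n,h)=h$. By the definition of minimal efficient structure, $n$ is the least integer with $M(n,h)=h$.

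Suppose, towards a contradiction, that $A$ is not crystallized. Then there is a polyform $B$ with exactly $h$ holes and $m=|B|<n$ tiles.

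\begin{itemize}
\item Applying Lemma \ref{lem:M(n,h)} to $B$ gives $h\le M(m,h)$.
\item Since $m<n$, Lemma \ref{lem:Mincreasing} gives $M(m,h)\le M(n,h)=h$.
\item Combining the two, $M(m,h)=h$.
\end{itemize}

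This contradicts the minimality of $n$ among integers satisfying $M(\cdot,h)=h$. Hence $g(h)=n$, and $A$ is crystallized.

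The main subtlety is that the comparison must be genuinely uniform in $h$. Lemma \ref{lem:M(n,h)} is stated with $\Pmin(n+h)$, and it relies on $\Pmin$ being nondecreasing for $p>3$ and on the adjusted argument for $p=3$. I would invoke it exactly as stated, for the same $h$, so no further monotonicity of $\Pmin$ is needed.

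I also need the notion of ``least integer with $M(n,h)=h$'' to be meaningful. Monotonicity guarantees that the set of $n'$ with $M(n',h)\ge h$ is an up-set. The inequality chain $h\le M(m,h)\le h$ shows that $m$ lies in the equality set, which is exactly what the definition forbids. A weaker reading of minimality, namely $M(n-1,h)<h$ as noted before the theorem, suffices equally well: by monotonicity, $M(m,h)\le M(n-1,h)<h$, which contradicts $h\le M(m,h)$.

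I expect no real obstacle beyond checking this bookkeeping. The only point of care is that efficiently structured polyforms exist only for $q>3$, as noted in the section; the argument itself does not use $q$.
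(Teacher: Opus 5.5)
Your proposal is correct and is essentially the paper's argument. The paper uses Lemma~\ref{lem:equality}, the minimality of $n$ and Lemma~\ref{lem:Mincreasing} to get $M(m,h)<h$ for every $m<n$, and then applies Corollary~\ref{cor:M(n,h)} directly; you run the same chain $h\le M(m,h)\le M(n,h)=h$ as a contradiction with the minimality of $n$, and your remark that $M(n-1,h)<h$ suffices matches the paper's observation just before the theorem.
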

    \begin{proof}
    Let $A$ be a minimally efficiently structured $\{p,q\}$-polyform with $n$ tiles and $h$ holes. We know by Lemma \ref{lem:equality} that $ M(n,h) = h$. Moreover, by minimality and by Lemma \ref{lem:Mincreasing} we have that $M(m,h) < M(n,h)= h$, for all $m<n$. Finally, by Corollary \ref{cor:M(n,h)},  no $\{p,q\}$-polyform exists with $m$ tiles and $h$ holes. So, this implies that $n$ is the minimum number of tiles required for a $\{p,q\}$-polyform to have $h$ holes. In other words, $A$ is crystallized. 
    \end{proof}

\begin{rmk}
$M(n+1, h) = M(n, h)$ if and only if $\Pmin^{p,q}(n+1+h) - \Pmin^{p,q}(n+h)=p-2$. The increment by $p-2$ in $\Pmin$ occurs when the spiral $S_{p,q}(n+1+h)$ is formed by attaching the $(n+1+h)$-th tile to $S_{p,q}(n+h)$ along only 1 edge. If the tile is attached along 2 edges, the increment in $\Pmin$ is $p-4$ instead. Moreover, to construct the spiral polyforms, consecutive tiles can be attached along 1 edge at most $q-2$ times in succession. Consequently, for fixed $h$, the function $M(n,h)$ can remain constant for at most $q-2$ consecutive values of $n$. As a result, if there is a polyform with $n$ tiles, $h$ holes and $M(n,h)=h$, then $n-(q-2) \le g(h)\le n$.
\end{rmk}

\section{Some values of $g(h)$.} \label{section:table}

We begin by introducing two types of polyforms that act as reference points for our constructions.

\begin{defn}
Construct the polyform $A_{p,q}(k)$ inductively as follows: Let $A_{p,q}(1)$ denote the $\{p,q\}$-polyform with only one $p$-gon. Then, construct $A_{p,q}(k)$ from $A_{p,q}(k-1)$ by adding precisely the tiles needed so that all the perimeter vertices of $A_{p,q}(k-1)$ are surrounded by $q$ tiles. We call $A_{p,q}(k)$ the \emph{complete $k$-layered $\{p,q\}$-polyform}. See Definition 1.2 in \cite{MaRoTo} 
\end{defn}

\begin{defn}    
$S_{p,q}(n)$ is an extremal polyform with $n$ tiles. It is constructed by following a spiral pattern from a central tile. See Definition 4.1 and Theorem 1.8 in \cite{MaRoTo}.
\end{defn}

\begin{thm*} [First part of Theorem \ref{thm:table}]
    $g_{p,q}(1)=p +(p-1)(q-3)=pq-2p-q+3$.
\end{thm*}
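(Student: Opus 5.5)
The plan is to prove matching upper and lower bounds. For the upper bound we build an explicit one-hole polyform with $p+(p-1)(q-3)$ tiles. For the lower bound we certify that this polyform is minimally efficiently structured and apply Theorem~\ref{thm:efficiently}. The case $q=3$ has to be treated separately, since then no polyform can be efficiently structured.

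\emph{Construction.} Fix a central tile $T$, which will be the hole. Around $T$ lie its $p$ edge-neighbours. At each of the $p$ vertices of $T$ there are in addition $q-3$ tiles that touch $T$ only at that vertex. Together these $p+p(q-3)$ tiles form $A_{p,q}(2)$ with the centre removed. At one chosen vertex $v$ of $T$ we delete the $q-3$ vertex-only tiles. The two edge-neighbours of $T$ at $v$ then meet only at the point $v$. Call the result $A$; it has $n=p+(p-1)(q-3)$ tiles. Its interior is still connected, and its dual graph is a path: going round $T$, the cycle of dual edges is broken exactly at $v$. The complement of $A$ has the bounded component $T$, so $A$ has exactly one hole, of area $1$. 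This gives $g_{p,q}(1)\le pq-2p-q+3$. When $q=3$ there are no vertex-only tiles, and the ring of $p$ edge-neighbours already gives $g_{p,3}(1)\le p$.

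\emph{Lower bound for $q>3$.} We check that $A$ is efficiently structured. It is acyclic and its hole has area $1$. It remains to show that $\overline{A}$, which is $A_{p,q}(2)$ minus $q-3$ tiles at one vertex, is extremal. For this we identify $\overline{A}$ with the spiral $S_{p,q}(n+1)$. The spiral starts at a central tile and fills the first layer cyclically, so stopping it just before the last $q-3$ corner tiles should give exactly this shape, up to symmetry. Alternatively we compute its perimeter directly and compare it with $\Pmin(n+1)$.

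Lemma~\ref{lem:equality} then gives $M(n,1)=1$. For minimality, the remark after Theorem~\ref{thm:efficiently} says it suffices to show $M(n-1,1)<1$, that is,
\[
\Pmin(n)>(n-1)(p-2)+2-p .
\]
To check this we track the spiral increments: each tile attached along one edge adds $p-2$ to the perimeter, and each tile attached along two edges adds $p-4$. The first edge-neighbour is attached along one edge. Each vertex-only tile is attached along one edge. Each later edge-neighbour closes a vertex and is attached along two edges. Summing the increments gives $\Pmin(n)$ explicitly, and we compare it with the required bound. Theorem~\ref{thm:efficiently} then gives $g_{p,q}(1)=n$.

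\emph{Lower bound for $q=3$.} Here we argue directly. The hole is at least one tile, and each of its $p$ edges must be covered by a tile of $A$. Distinct edges of a single tile border distinct neighbouring tiles, so $|A|\ge p$. Since $(p-1)(q-3)=0$ in this case, this matches the formula.

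I expect the main obstacle to be the extremality and minimality verification for $q>3$. This means carefully matching $\overline{A}$ with $S_{p,q}(n+1)$ from \cite{MaRoTo}, and getting the exact one-edge/two-edge attachment counts. These counts are what determine whether $M(n-1,1)$ drops strictly below $1$. The case $p=3$ needs particular care, because there $\Pmin$ is not monotone; this is exactly why Lemma~\ref{lem:M(n,h)} required the extra argument when $p=3$.
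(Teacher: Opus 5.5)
For $q>3$ your argument is essentially the paper's. You use the same construction, and the decisive step is the same computation $M(n-1,1)=\frac{p-2}{p}<1$ with $n=(p-1)(q-2)+1$. This comes from $\Pmin(N)=p+(p-2)(N-1)-2\lfloor\frac{N-2}{q-2}\rfloor$ for $q\le N\le p(q-2)$, and your spiral-increment count reproduces exactly this value.

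Routing through Theorem~\ref{thm:efficiently}, however, is a detour. The construction already gives the upper bound. For the lower bound, $M(n-1,1)<1$ together with Lemma~\ref{lem:Mincreasing} and Corollary~\ref{cor:M(n,h)} rules out every one-hole polyform with fewer than $n$ tiles. So the step you flag as the main obstacle, extremality of $\overline{A}$, is never needed. It is true: the dual graph of $\overline{A}$ has exactly $p-1$ independent cycles, so $p_o(\overline{A})=(p-2)(N-1)+p-2(p-1)=\Pmin(N)$ with $N=n+1$. But the paper's proof does not use it.

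The genuine gap is in your separate lower bound for $q=3$. You claim that each of the hole's $p$ edges is covered by a tile of $A$. That presupposes the hole is a single tile. If the hole has area $m\ge 2$, a hole tile has edges bordering other hole tiles. You have not shown that at least $p$ distinct tiles of $A$ surround such a hole; this is true but needs an argument.

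The simplest repair is not to split cases at all. Lemma~\ref{lem:M(n,h)}, Lemma~\ref{lem:Mincreasing} (which explicitly covers $q=3$) and Corollary~\ref{cor:M(n,h)} make no assumption about hole sizes. For $q=3$ the closed form above is valid on $3\le N\le p$. Plugging in $n=p$ gives $M(p-1,1)=\frac{(p-1)(p-2)+2-\bigl(p+(p-2)(p-1)-2(p-2)\bigr)}{p}=\frac{p-2}{p}<1$. This is precisely the paper's single computation, which handles all $q\ge 3$ uniformly.
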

\begin{proof}
    Construct the polyform $B=B_{p,q}(1)$ as follows: Begin with $A_{p,q}(2)$. Select a vertex incident to the central tile and remove all tiles adjacent to that vertex except those two that neighbor the central tile. See first column of Figure \ref{fig:g(h)vals}.
    
    $B$ is connected, because the second layer is unchanged except for the removal of $q-3$ consecutive tiles. Since the second layer initially formed a cycle, removing  consecutive tiles cannot disconnect it. $B$ also has a hole, as the tile that was in the first layer is now removed, but all tiles that were adjacent to it were preserved. 
    The number of tiles in $B$ is $p + (p-1)(q-3) = pq-2p-q+3$, as there are $p$ tiles adjacent to the hole, and $q-3$ tiles are required to connect two neighboring tiles adjacent to the same hole, one set for each of the $p-1$ vertices. 

    Finally, we compute $M(|B|-1,1)$. Equation (1.9) from \cite{isoperimetric} provides the formula for $\Pmin^{p,q}(N)$ for $N\le p(q-2)+1$. In particular, 
   \begin{align*}
        \Pmin^{p,q}(N) = 
              p + (p-2)\left(N-1 \right) -2 \left \lfloor \frac{N-2}{q-2}\right \rfloor, & \text{ if } q\le N \le p(q-2) .
    \end{align*}
    Plugging this into the formula $M(n,h) = \frac{n(p-2) + 2 -   \Pmin^{p,q}(n+h)}{p}$ and simplifying, it follows that for $n=pq-2p-q+2$ and $h=1$:
    \begin{align*}
        M(n,1) &= \frac{n(p-2)+2-\left( p+(p-2)(n+1-1)-2 \left \lfloor \frac{n-1}{q-2} \right  \rfloor \right)}{p}, \\
        &= \frac{-(p-2)+2 \left \lfloor \frac{p(q-2)-(q-2)-1}{q-2} \right \rfloor}{p}, \\
        &= \frac{-(p-2)+2 \left( p-2\right)}{p}= \frac{p-2}{p}<1. 
    \end{align*}
    Therefore, by Corollary \ref{cor:M(n,h)}, there is no polyform with $1$ hole and fewer tiles than $B$. Thus, $g(1) = pq-2p-q+3$.
\end{proof}

\begin{rmk}
Note that the value of $g_{p,q}(1)$ is equal to the upper bound established in Theorem \ref{thm:bounds}, therefore Theorem \ref{thm:bounds} holds for $h = 1$ as well.   
\end{rmk}

Now we proceed to complete the proof of Theorem \ref{thm:table} and describe the program used to obtain Table \ref{table:g(h)vals} and Figure \ref{fig:g(h)vals}.

The program \cite{PolyformsGit} first generates the subset of the $\{p,q\}$ tessellation equal to $A_{p,q}(3)$, then recursively generates polyforms one tile at a time in a manner similar to a depth-first search.
More specifically, the recursive step is a procedure that receives a polyform, the tile most recently added to the polyform (referred to as $X$), and a maximum number of tiles for the polyform as input, and that outputs a polyform with as many holes as possible with preference to minimal tiles. 

In the procedure, if the number of tiles in the polyform is less than the specified maximum, a copy of the polyform is created for each tile adjacent to $X$ that could be added to the polyform, and that tile is added to the corresponding copy. Each copy is then checked against the set of polyforms that have been generated so far, and any duplicates are discarded. The recursive step is then repeated on each of the copies, with each output replacing the respective input polyform. Then, the polyforms are compared and one with the most holes (and in the event of a tie, fewest tiles) is output. The program then begins by creating a polyform containing one tile and running the recursive procedure using it.

It is worth noting that this procedure does not enumerate all polyforms up to the given size. This is due to the starting tile having a fixed position and the requirement that tiles are only added to the polyform adjacent to the most recently added tile. These restrictions were added to allow a significant number of polyforms to be checked faster than if tiles could be added anywhere, while consuming less memory. Especially in cases where $p=3$, this can cause the output polyforms to not be crystallized. However, these polyforms appear to be only a few tiles larger than the crystallized ones, and they show that for many $p,q,$ and small $h$, $g(h) \leq g(1) + (h-1)(g(1)-3)$ except when $p=3$ or $q=3$. See Table \ref{table:g(h)vals} for a table of the fewest tiles we have found for a polyform with $h$ holes in various tessellations.   

It is also worth discussing why we only generated polyforms with a few holes. Despite sacrificing a fully exhaustive check in exchange for faster completion requiring less memory, the number of possible configurations for polyforms quickly becomes overwhelming. In order to cope with the number of possible structures, we must either enforce additional rules on how structures are generated or improve the detection of polyforms that are isomorphic to each other. We cannot enforce rules that have not been proven necessary for crystallization, as these assumptions may prevent the discovery of rare or unintuitive exceptions which could be required for crystallization in some cases. We could improve the detection of isomorphism, but this comes with more problems: Firstly, we must either identify a canon form for each polyform (which is an unsolved problem) or check every new polyform against every previously generated polyform, which would become increasingly expensive. Secondly, checking for isomorphism is not straightforward; enumeration of polyforms is also an unsolved problem, and simple techniques that help in the euclidean case are not immediately applicable to hyperbolic tessellations. Thirdly, these improvements would not make it feasible to check arbitrarily large polyforms, but would rather push back the cutoff point at which they take too long or consume too much memory to check. Ultimately this would not lead us to an equation for $g(h)$, so we directed our focus elsewhere.

\begin{table}[!ht]
\centering
\setlength{\tabcolsep}{4pt}
\renewcommand{\arraystretch}{1.15}
\small

\begin{tabular}{|c||c|c|c||c|c|c|}
\hline
 & \multicolumn{3}{c||}{\{3,7\}} & \multicolumn{3}{c|}{\{3,8\}} \\
\hline
$h$ & $M(n\!-\!2,h)$ & $M(n\!-\!1,h)$ & $M(n,h)$ & $M(n\!-\!2,h)$ & $M(n\!-\!1,h)$ & $M(n,h)$ \\
\hline
1 & 0.3 & \crystcell{0.3} & \crystcell{1.0} & 0.3 & \crystcell{0.3} & \crystcell{1.0} \\
\hline
2 & 1.3 & \crystcell{1.3} & \crystcell{2.0} & 1.3 & \crystcell{1.3} & \crystcell{2.0} \\
\hline
3 & 2.3 & \crystcell{2.3} & \crystcell{3.0} & 2.3 & \crystcell{2.3} & \crystcell{3.0} \\
\hline
4 & 3.3 & \crystcell{3.3} & \crystcell{4.0} & 3.3 & \crystcell{3.3} & \crystcell{4.0} \\
\hline
5 & 4.3 & \precrcell{5.0} & 5.0 & 4.3 & \crystcell{4.3} & \crystcell{5.0} \\
\hline
\end{tabular}

\vspace{6pt}

\begin{tabular}{|c||c|c|c||c|c|c|}
\hline
 & \multicolumn{3}{c||}{\{4,5\}} & \multicolumn{3}{c|}{\{4,6\}} \\
\hline
$h$ & $M(n\!-\!2,h)$ & $M(n\!-\!1,h)$ & $M(n,h)$ & $M(n\!-\!2,h)$ & $M(n\!-\!1,h)$ & $M(n,h)$ \\
\hline
1 & 0.5 & \crystcell{0.5} & \crystcell{1.0} & 0.5 & \crystcell{0.5} & \crystcell{1.0} \\
\hline
2 & 1.5 & \precrcell{2.0} & 2.0 & 1.5 & \precrcell{2.0} & 2.0 \\
\hline
3 & 2.5 & \precrcell{3.0} & 3.0 & 2.5 & \precrcell{3.0} & 3.0 \\
\hline
4 & 4.0 & 4.0 & 4.5 & 3.5 & \precrcell{4.0} & 4.0 \\
\hline
5 & 5.0 & 5.5 & 5.5 & 5.0 & 5.0 & 5.0 \\
\hline
\end{tabular}

\vspace{6pt}

\begin{tabular}{|c||c|c|c||c|c|c|}
\hline
 & \multicolumn{3}{c||}{\{5,4\}} & \multicolumn{3}{c|}{\{5,5\}} \\
\hline
$h$ & $M(n\!-\!2,h)$ & $M(n\!-\!1,h)$ & $M(n,h)$ & $M(n\!-\!2,h)$ & $M(n\!-\!1,h)$ & $M(n,h)$ \\
\hline
1 & 0.6 & \crystcell{0.6} & \crystcell{1.0} & 0.6 & \crystcell{0.6} & \crystcell{1.0} \\
\hline
2 & 1.6 & \precrcell{2.0} & 2.0 & 1.6 & \precrcell{2.0} & 2.0 \\
\hline
3 & 2.6 & \precrcell{3.0} & 3.4 & 2.6 & \precrcell{3.0} & 3.0 \\
\hline
4 & 4.0 & 4.0 & 4.4 & 4.0 & 4.0 & 4.0 \\
\hline
5 & 5.0 & 5.4 & 5.4 & 5.0 & 5.0 & 5.0 \\
\hline
\end{tabular}

\vspace{6pt}

\begin{tabular}{|c||c|c|c||c|c|c|}
\hline
 & \multicolumn{3}{c||}{\{6,4\}} & \multicolumn{3}{c|}{\{7,3\}} \\
\hline
$h$ & $M(n\!-\!2,h)$ & $M(n\!-\!1,h)$ & $M(n,h)$ & $M(n\!-\!2,h)$ & $M(n\!-\!1,h)$ & $M(n,h)$ \\
\hline
1 & 0.7 & \crystcell{0.7} & \crystcell{1.0} & 0.4 & \crystcell{0.7} & \crystcell{1.3} \\
\hline
2 & 1.7 & \precrcell{2.0} & 2.0 & 2.0 & 2.3 & 2.6 \\
\hline
3 & 2.7 & \precrcell{3.0} & 3.0 & 3.6 & 3.9 & 4.1 \\
\hline
4 & 3.7 & \precrcell{4.0} & 4.3 & 5.1 & 5.4 & 5.7 \\
\hline
5 & 5.0 & 5.0 & 5.3 & 6.4 & 6.7 & 7.0 \\
\hline
\end{tabular}

\caption{Computed values of $M(n-2,h)$, $M(n-1,h)$, and $M(n,h)$ for the parameter tuples $(p,q,h,n)$
appearing in Table~\ref{table:g(h)vals}, where $n$ is the corresponding entry in that table.
Dark gray shading certifies crystallization at $n$, i.e.\ $M(n-1,h) < h \le M(n,h)$.
Light gray shading indicates that $n-2$ tiles are ruled out by the $M$--bound while $n-1$ tiles are not,
i.e.\ $M(n-2,h) < h \le M(n-1,h)$.
}
\label{table:g(h)values-proof}
\end{table}

To conclude this section, we show that shaded entries in Table~\ref{table:g(h)vals} correspond to crystallized polyforms. This is verified by checking that $h > M(n-1,h)$ and applying Corollary~\ref{cor:M(n,h)}. Thus completing the proof of Theorem~\ref{thm:table}.

Table~\ref{table:g(h)vals} lists the number of tiles $n$ of the various $\{p,q\}$-polyforms with $h$ holes depicted in Figure \ref{fig:g(h)vals}. For each tuple $(p,q,h,n)$, we compute $M(n-2,h)$, $M(n-1,h)$, and $M(n,h)$. The results are summarized in Table~\ref{table:g(h)values-proof}. The values of $\Pmin$ are obtained using Theorem~1 in \cite{isoperimetric}; an online calculator is available in \cite{beautifulanimals2}.

\section{Proof of Theorem \ref{thm:bounds}} \label{section:bounds}

In this section we show matching linear bounds on $g_{p,q}(h)$. We combine the techniques for polyominoes from \cite{MaR} with  Rold\'an--Toalá’s isoperimetric Theorem 1 in \cite{isoperimetric} to obtain the lower bound $(\beta p-\beta-1)h$ and use an explicit construction to get the upper bound $((p-1)(q-2) + 1)h$. 

For the explicit construction, we start by reviewing properties of hyperbolic geometry necessary to create the construction. 

\begin{lem} \label{parallel quadrilaterals}
    If $l$ and $m$ are geodesics that pass through opposing edges of a hyperbolic quadrilateral with all interior angles less than $\frac{\pi}{2}$, then $l$ and $m$ are ultraparallel \footnote{In Hyperbolic Geometry, two lines are said to be ultraparallel if they do not intersect, even asymptotically (i.e., even at the boundary of the Poincaré disk model).}.
\end{lem}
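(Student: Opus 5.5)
The plan is to prove the lemma by contradiction, using the hyperbolic angle-sum defect. Let $Q=ABCD$ be the quadrilateral, with $l$ the geodesic through side $AB$ and $m$ the geodesic through the opposite side $CD$. Since $l$ and $m$ contain sides of $Q$, they are the full geodesic extensions of $AB$ and $CD$. We then rule out, in turn, that they meet at a finite point and that they are asymptotically parallel.

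\emph{Step 1 (no finite intersection).} Suppose $l$ and $m$ meet at a point $P$. Because $Q$ is convex (a hyperbolic quadrilateral with angles less than $\pi$), $P$ lies outside $Q$ on the extension of both sides, beyond one of the other two sides, say beyond $BC$. We do not need to settle whether $l$ and $m$ meet on the side of $BC$ or of $AD$; the argument is symmetric. Then $P$, $B$, $C$ span a geodesic triangle $PBC$. Its interior angle at $B$ is the supplement of the quadrilateral's angle at $B$, because $A$, $B$, $P$ are collinear with $B$ between $A$ and $P$. Likewise its angle at $C$ is $\pi-\angle C$. Since $\angle B,\angle C<\pi/2$, both of these triangle angles exceed $\pi/2$. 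Their sum therefore exceeds $\pi$, which contradicts the fact that a hyperbolic triangle has angle sum less than $\pi$. This step also uses only two adjacent angles being acute.

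\emph{Step 2 (no common ideal endpoint).} Suppose $l$ and $m$ are asymptotically parallel, sharing an ideal point $\omega$ on the far side of $BC$. The region bounded by $B$, $C$ and $\omega$ is an ideal triangle with one vertex at infinity, with angles $\pi-\angle B$ and $\pi-\angle C$ at $B$ and $C$ and angle $0$ at $\omega$. Its area $\pi-(\pi-\angle B)-(\pi-\angle C)=\angle B+\angle C-\pi$ must be positive, but it is negative since both angles are less than $\pi/2$, a contradiction. Alternatively, one can approximate $\omega$ by finite points along $m$ and pass to the limit in the triangle inequality from Step 1. Combining Steps 1 and 2, $l$ and $m$ are ultraparallel.

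\emph{Main obstacle.} I expect the only delicate point to be justifying the configuration: that an intersection point (or common ideal point) lies beyond a side adjacent to both $AB$ and $CD$, and that the angles at $B$ and $C$ of the triangle are the supplements. This follows from convexity of $Q$ together with the fact that $l$ and $m$ each meet $Q$ exactly along one side. I would state this explicitly, perhaps in the Poincaré disk or Klein model, where convexity is transparent.
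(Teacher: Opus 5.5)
Your proof is correct and follows essentially the paper's argument: an intersection point beyond a side $BC$ would give a triangle whose angles at $B$ and $C$ are supplements of acute angles, contradicting the hyperbolic angle-sum bound. The paper reaches that configuration by locating the intersection point relative to the lines through the other two sides, whereas you get it directly from convexity; your Step~2 also rules out a common ideal endpoint, a case the paper's proof never treats even though its footnote's definition of ultraparallel requires it.
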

\begin{proof}
    Let $Q$ be a hyperbolic quadrilateral with all interior angles less than $\frac{\pi}{2}$. Label the geodesics that pass through each edge of $Q$ as $l, m, l',$ and $m'$, with $l,m$ and $l',m'$ passing through opposing edges. Suppose for the sake of contradiction that $l$ and $m$ intersect at a point $x$.
    Note that $l'$ and $m'$ each partition the plane into two regions. Of the two regions that $l'$ partitions, assume $x$ is located in the region not containing the quadrilateral $Q$. Because the interior angles of $Q$ are all less than $\pi/2$, the triangle formed by $x$ and the intersection of $l'$ with $l$ and $m$ must have angles greater than $\pi/2$ at two of its corner. Thus, the sum of its interior angles is greater than $\pi$, which is a contradiction on the hyperbolic plane, and $x$ must be located in the region containing $Q$. For the same reason it must be on the side of $m'$ containing $Q$. However, if $l$ and $m$ intersect in this region, there exists a point on $m$ between the vertices of $Q$ where $m$ and $l'$ meet and where $m$ and $m'$ meet, which contradicts the hypothesis that $Q$ is a quadrilateral. Thus, $l$ and $m$ are ultraparallel. The same argument proves that $m'$ and $l'$ are ultraparallel.
\end{proof}
\begin{lem}\label{parallel tile edges}
    If $X$ is a tile in a $\{p,q\}$-tessellation with $q > 3$ then geodesics through nonadjacent edges of $X$ are ultraparallel.
\end{lem}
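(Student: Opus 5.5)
The plan is to reduce to Lemma~\ref{parallel quadrilaterals} by placing a suitable quadrilateral inside the tile $X$, and to handle the boundary case $q=4$ separately.

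\textbf{Setup.} Let $e_1=[a,b]$ and $e_2=[c,d]$ be nonadjacent edges of $X$, labelled so that $a,b,c,d$ occur in this cyclic order around $\partial X$. Nonadjacency gives four distinct vertices, so $Q=abcd$ is a genuine quadrilateral. Since $X$ is a convex regular polygon, $Q$ is convex and contained in $X$. Let $l$ be the geodesic through $ab$ and $m$ the geodesic through $cd$; these pass through opposite edges of $Q$.

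\textbf{Bounding the angles of $Q$.} The interior angle of a tile in a $\{p,q\}$-tessellation is $2\pi/q$, so for $q>3$ it is at most $\pi/2$. At $b$, the angle of $Q$ is the angle between the edge $ba$ and the segment $bc$. The segment $bc$ lies inside the interior angle of $X$ at $b$, so the angle of $Q$ at $b$ is at most $2\pi/q\le\pi/2$. It is strictly smaller than $2\pi/q$ unless $c$ is the neighbour of $b$ along $\partial X$. The same holds at $a$, $c$ and $d$.

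\textbf{Generic case.} If all four angles of $Q$ are strictly less than $\pi/2$, Lemma~\ref{parallel quadrilaterals} applies directly and $l,m$ are ultraparallel. This happens whenever $q\ge5$, and also when $q=4$ provided the edges are separated by at least two edges on both sides.

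\textbf{Boundary case.} The remaining situation is $q=4$ with an angle of $Q$ exactly $\pi/2$, say at $b$ because $bc$ is an edge of $X$. Then the angle at $c$ is also $2\pi/4=\pi/2$. Both $l$ and $m$ are perpendicular to the geodesic through $bc$, at the distinct points $b$ and $c$. Suppose $l$ and $m$ met at some point $x$, possibly at infinity. Then $x,b,c$ would form a triangle with two right angles, whose angle sum is at least $\pi$. A hyperbolic triangle has angle sum less than $\pi$ (an ideal vertex contributes angle $0$), so this is a contradiction. Hence $l,m$ have a common perpendicular and are ultraparallel. If only $a,d$ are adjacent, the same argument applies with the roles swapped.

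\textbf{Main obstacle.} The step needing most care is this degenerate right-angle case at $q=4$, because the hypothesis of Lemma~\ref{parallel quadrilaterals} is strict and fails there. The common-perpendicular argument above is what I would use to close that gap. I would also double-check the convexity claim that the diagonal angles are bounded by the tile's interior angle.
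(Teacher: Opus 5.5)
Your proposal is correct and follows essentially the same argument as the paper. You inscribe the quadrilateral $abcd$ in $X$ and use convexity to bound its angles by the interior angle $2\pi/q\le\pi/2$. When all four angles are strictly acute you apply Lemma~\ref{parallel quadrilaterals}; otherwise ($q=4$ with $bc$ an edge of $X$) you use the common perpendicular. Your two-right-angles triangle argument just makes explicit what the paper dismisses as ``clear.''
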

\begin{proof}
     Let $X$ be a tile in a $\{p,q\}$-tessellation with $q > 3$. Because $q > 3$ all interior angles of $X$ are less than or equal to $\frac{\pi}{2}$. Let $l$ and $m$ be the non-adjacent sides of $X$. Connect $l$ and $m$ by two geodesics, each going through one vertex on $l$ and one on $m$ to form a quadrilateral. Because $X$ is convex, the angles formed by these geodesics and $l$ and $m$ are less than or equal to the interior angles of each vertex, which is in turn no greater than $\frac{\pi}{2}$. This implies that for any pair of geodesics through non-adjacent edges of $X$ either there is a perpendicular between them, or the quadrilateral formed by them and geodesics through their vertices has all angles less than $\frac{\pi}{2}$. In the first case it is clear that they are ultraparallel, and in the second it is implied by Lemma \ref{parallel quadrilaterals}.  
\end{proof}

\begin{lem}\label{parallel tile edgesp3}
    If $p=3$, then opposite sides of a quadrilateral formed by two adjacent tiles are ultra parallel.
\end{lem}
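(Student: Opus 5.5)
The plan is to reduce the statement to Lemma \ref{parallel quadrilaterals}, exactly as was done for Lemma \ref{parallel tile edges}. Let $X$ and $Y$ be adjacent triangular tiles in a $\{3,q\}$-tessellation sharing the edge $uv$, and let $a$ and $b$ be their remaining vertices. Then $Q=X\cup Y$ is the quadrilateral with vertices $a,u,b,v$ in cyclic order. Its opposite sides are $au$ and $bv$ (one from each triangle), and $av$ and $bu$. We want to show that the geodesics through each opposite pair are ultraparallel.

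First, we compute the angles of $Q$. Each tile is a regular triangle whose interior angle is $2\pi/q$, since $q$ tiles meet at each vertex. The angles of $Q$ at $a$ and $b$ are therefore $2\pi/q$. The angles at $u$ and $v$ are the sum of two tile angles, namely $4\pi/q$. Hyperbolicity for $p=3$ means $(3-2)(q-2)>4$, that is, $q\ge 7$. Hence $4\pi/q\le 4\pi/7<\pi/2$ and $2\pi/q<\pi/2$, so every interior angle of $Q$ is strictly less than $\pi/2$.

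Next, we must check that $Q$ really is a hyperbolic quadrilateral in the sense used in Lemma \ref{parallel quadrilaterals}, i.e.\ a convex region bounded by four geodesic segments. Two points need to be verified:
\begin{itemize}
\item the four sides are geodesic segments, which holds because they are tile edges;
\item $Q$ is convex, which holds because all angles are less than $\pi$ (indeed less than $\pi/2$) and the union of two convex triangles along a common edge with convex angles at the shared endpoints is convex.
\end{itemize}
With this in place, Lemma \ref{parallel quadrilaterals} applies directly. It gives that the geodesics through $au$ and $bv$ are ultraparallel, and likewise those through $av$ and $bu$.

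The only step needing care is confirming that the proof of Lemma \ref{parallel quadrilaterals} uses nothing beyond convexity and acute angles. Its triangle argument compares angles at the intersections of $l'$ with $l$ and $m$, and these are supplements of interior angles of $Q$. Those angles exceed $\pi/2$ precisely because the interior angles are below $\pi/2$, and the strict inequality $4\pi/q<\pi/2$ is exactly where $q\ge 7$ is used. So I expect no real obstacle beyond stating the angle computation. If one wanted to avoid relying on that lemma, a fallback would be to use the symmetry of $Q$ under the rotation by $\pi$ about the midpoint of $uv$. This rotation swaps $au$ with $bv$, so if the two geodesics met at a point they would have to meet again at its image, contradicting uniqueness of geodesics, unless the intersection point were the fixed center. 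That case is excluded since the center lies on neither line.
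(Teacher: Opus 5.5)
Your main route has a genuine gap: the inequality $4\pi/7<\pi/2$ is false, since $4/7>1/2$.

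The angles of $Q$ at $u$ and $v$ are $4\pi/q$. This is larger than $\pi/2$ for $q=7$ and equal to $\pi/2$ for $q=8$; all four angles are acute only when $q\ge 9$. So the hypothesis of Lemma~\ref{parallel quadrilaterals} fails exactly for $\{3,7\}$ and $\{3,8\}$, the triangular tessellations used in Table~\ref{table:g(h)vals}. Your claim that ``$4\pi/q<\pi/2$ is exactly where $q\ge 7$ is used'' is therefore wrong.

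The fix, which is what the paper does, is to run the triangle argument directly instead of quoting the lemma:
\begin{itemize}
\item Each side of $Q$ joins a vertex of angle $2\pi/q$ to one of angle $4\pi/q$.
\item If the lines through two opposite sides met at a point $x$, the triangle formed by $x$ and the side of $Q$ joining those lines would have two angles $\pi-2\pi/q$ and $\pi-4\pi/q$.
\item These sum to $2\pi-6\pi/q\ge 8\pi/7>\pi$, a contradiction.
\end{itemize}
What is needed is that \emph{adjacent} interior angles sum to less than $\pi$, not that each is acute. The remaining configurations are excluded because the corresponding rays lie on opposite sides of the geodesic through $u$ and $v$: either $x$ lies beyond $u$ on one line and beyond $v$ on the other, or beyond $a$ and beyond $b$.

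Your fallback, by contrast, is a correct and genuinely different proof, and it deserves to be the main argument. The half-turn $\rho$ about the midpoint $c$ of $uv$ is a symmetry of the tessellation exchanging $X$ and $Y$. So $\rho(u)=v$ and $\rho(a)=b$, and hence $\rho$ swaps the line through $a,u$ with the line through $b,v$, and likewise the line through $a,v$ with the line through $b,u$. A common point of two swapped lines is fixed by $\rho$, hence equals $c$. But $c$ lies on neither line: a line through $u$ (or $v$) and $c$ is the geodesic $uv$, which contains neither $a$ nor $b$.

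To get \emph{ultra}parallel rather than merely disjoint, add one sentence. Two distinct geodesics share at most one ideal endpoint, so a shared one would also be fixed by $\rho$. A half-turn has no fixed points on the ideal boundary (in the disk model centered at $c$ it is $z\mapsto -z$).

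This symmetry argument needs no angle computation and works for any centrally symmetric quadrilateral in the hyperbolic plane. The paper's angle-sum argument is tied to the specific angles $2\pi/q$, $4\pi/q$ and the bound $q\ge 7$. Your argument also disposes of the asymptotic case, which the paper's proof does not address explicitly.
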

\begin{proof}
This can be shown by contradiction. Note that each quadrilateral of this form has two opposing interior angles of $\frac{2\pi}{q}$ and two opposing interior angles of $\frac{4\pi}{q}$. If there exists any point at which these lines intersect, they would form a triangle with one side of the quadrilateral and the extension of the two sides adjacent to it. This triangle would have one angle measuring $\pi - \frac{2\pi}{q}$ and one measuring $\pi - \frac{4\pi}{q}$. Thus, the interior angles of this triangle would add to at least $2\pi - \frac{6\pi}{q}$. Since $q \ge 7$ when $p = 3$, the angles sum to no less than $2\pi - \frac{6\pi}{7} = \frac{8\pi}{7} > \pi$, which is a contradiction as hyperbolic triangles have an interior angle sum of no more than $\pi$. Thus, opposing sides are parallel.
\end{proof}
This concludes the preparations to construct the sequence of polyforms for Theorem \ref{thm:bounds}. Now we state a lemma to bound the number of tiles needed to achieve 2 holes. 

\begin{lem} \label{lem:h2}
    If $n+h\le p(q-2)+1$ and $h\ge 2$, then there is no polyform with $n$ tiles and $h$ holes. 
\end{lem}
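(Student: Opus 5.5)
The plan is to apply Corollary~\ref{cor:M(n,h)}: it suffices to show that $M(n,h)<h$ whenever $h\ge 2$ and $N:=n+h\le p(q-2)+1$. All the needed information about $\Pmin$ in this range comes from the closed formula already quoted in the proof of the first part of Theorem~\ref{thm:table} (Equation (1.9) of \cite{isoperimetric}):
\[
\Pmin(N)=p+(p-2)(N-1)-2\left\lfloor \tfrac{N-2}{q-2}\right\rfloor, \qquad q\le N\le p(q-2).
\]
For $N<q$ the extremal polyform is a path of tiles glued along single edges, so $\Pmin(N)=p+(p-2)(N-1)$. This agrees with the formula above with floor term $F:=0$ (adjusting the boundary convention at $N=q-1$ if needed). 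The endpoint $N=p(q-2)+1$ is not covered by the formula, and we treat it separately.

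\textbf{Main case, $N\le p(q-2)$.} Substitute the formula into $M(n,h)$. The terms in $n$ cancel, because $n(p-2)-(p-2)(n+h-1)=-(p-2)(h-1)$. Writing $F=\lfloor (N-2)/(q-2)\rfloor$, this leaves
\[
M(n,h)=\frac{-(p-2)h+2F}{p}.
\]
Hence $M(n,h)<h$ is equivalent to $F<(p-1)h$. Since $N\le p(q-2)$, we have $F\le \lfloor (p(q-2)-2)/(q-2)\rfloor = p-1$. Because $h\ge 2$, this gives $F\le p-1<(p-1)h$, which settles this case.

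\textbf{Endpoint, $N=p(q-2)+1$.} Here $\overline{A}$ would have as many tiles as $A_{p,q}(2)$. I would not compute $\Pmin$ exactly, but instead use the jump characterization quoted in Lemma~\ref{lem:Mincreasing}: $\Pmin(N)\ge \Pmin(N-1)+(p-4)$ if $q>3$, and $\Pmin(N)\ge\Pmin(N-1)+(p-6)$ if $q=3$. Applying the formula to $\Pmin(N-1)$, where $F\le p-1$, gives
\[
pM(n,h)\le -(p-2)h+2(p-1)+c, \qquad c=2 \text{ if } q>3,\quad c=4 \text{ if } q=3.
\]
The required inequality $M(n,h)<h$ then reduces to $2(p-1)+c<2(p-1)h$. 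For $h\ge2$ it suffices to have $2p+c<4p-2$:
\begin{itemize}
\item If $q>3$, this needs $p>3$.
\item If $q=3$, this needs $p>4$, which holds since $p\ge 7$ in that case.
\end{itemize}

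\textbf{Remaining case, $p=3$, $N=3(q-2)+1$.} This is the one case the crude jump bound does not cover. There I would compute $\Pmin^{3,q}(3q-5)$ directly from the spiral description in \cite{MaRoTo}. One could also use the trivial estimate that the last jump is $\pm1$, which gives $3M\le -h+2\cdot2+1+\dots$; this should still be strictly below $3h$ for $h\ge2$, but needs checking. One further point: for $p=3$ the statement of Lemma~\ref{lem:M(n,h)} was proved via the modified argument, so Corollary~\ref{cor:M(n,h)} still applies.

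I expect the main obstacle to be this boundary bookkeeping, namely the endpoint $N=p(q-2)+1$, the small cases $N<q$, and the $p=3$ adjustments. The core computation itself collapses neatly to $F<(p-1)h$.
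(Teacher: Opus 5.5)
Your argument follows the paper's proof. You apply Corollary~\ref{cor:M(n,h)}, substitute the closed form of $\Pmin(N)$ for $N\le p(q-2)+1$, and note that the $n$-terms cancel, leaving $p\,M(n,h)=-(p-2)h+2F$. The one difference is at the endpoint $N=p(q-2)+1$. There the paper quotes the exact value $\Pmin(p(q-2)+1)=p(p-2)(q-2)-p$, which gives $p\,M(n,h)=2p-(p-2)h$ and hence $M(n,h)<2\le h$. You instead bound the last jump using Lemma~\ref{lem:Mincreasing}. This works, and for $q>3$ your bound is an equality: the last jump is exactly $p-4$, so you recover the paper's value.

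Your \emph{remaining case} is not a gap. It comes from mis-solving your own inequality. With $c=2$, the condition $2p+c<4p-2$ reads $2p+2<4p-2$, which is $p>2$, not $p>3$. So $p=3$ (which forces $q\ge 7>3$) is already covered: your bound gives $3M(n,h)\le -h+2\cdot 2+2=6-h<3h$ for $h\ge 2$. Likewise, for $q=3$ and $c=4$ the condition is $p>3$, not $p>4$. You can therefore drop the final paragraph about computing $\Pmin^{3,q}(3q-5)$ from the spiral, and the proof is complete.

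A minor point: for $q=3$ you have $\lfloor (p(q-2)-2)/(q-2)\rfloor=p-2$, not $p-1$. This does not matter, since you only use $F\le p-1$.
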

\begin{proof}
    By Corolary \ref{cor:M(n,h)}, it suffices to show that $h>M(n,h)$. For values $N\le p(q-2)+1$ we have the formula for $\Pmin^{p,q}(N)$ from \cite[Equation (1.9)]{isoperimetric}, .
   \begin{align}   \label{eqn:P_min3}
        \Pmin^{p,q}(N) = 
        \begin{cases}
              p+(p-2)(N-1), & \text{ if } 1\le N < q, \\
              p + (p-2)\left(N-1 \right) -2 \left \lfloor \frac{N-2}{q-2}\right \rfloor, & \text{ if } q\le N \le p(q-2) , \\
              p(p-2)(q-2)-p, & \text{ if } N = p(q-2)+1. \\
        \end{cases}
    \end{align}
    Plugging this into the formula $M(n,h) = \frac{n(p-2) + 2 -   \Pmin^{p,q}(n+h)}{p}$ and simplifying, it follows that for $n+h\le p(q-2)+1$:
    \begin{align*}
M(n,h) = \begin{cases}
             \frac{-h(p-2)}{p}, & \text{ if } 1\le n+h < q, \\
              \frac{-h(p-2)+2 \left \lfloor \frac{n+h-2}{q-2}\right \rfloor }{p}, & \text{ if } q\le n+h \le p(q-2) , \\
              \frac{-h(p-2)+2p}{p}, & \text{ if } n+h = p(q-2)+1. \\
        \end{cases}
\end{align*}
In all cases
    \begin{align*}
M(n,h) \le \frac{-h(p-2)+2p}{p}<2 \le h .
\end{align*}
Therefore, any polyform with $h\ge2$ must satisfy $n+h > p(q-2)+1$.
\end{proof}

\begin{thm*}[Theorem \ref{thm:bounds}]
For $q>3$ and any integer $h\geq 2$,
    \begin{align} \label{eqn:bounds}
      \beta\left(p-1-\frac{1}{\beta}\right)h  \leq g(h)  \leq h((p-1)(q-2) + 1).
    \end{align}
\end{thm*}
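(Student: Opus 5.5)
The proof has two halves: a lower bound from the counting inequality of Lemma \ref{lem:M(n,h)} combined with the hyperbolic isoperimetric estimate, and an upper bound from an explicit chain construction.

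\textbf{Lower bound.} Let $A$ be crystallized with $n=g(h)$ tiles and $h\ge 2$ holes. Lemma \ref{lem:M(n,h)} gives $ph\le n(p-2)+2-\Pmin(n+h)$.

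The first step is a linear lower bound on $\Pmin(N)$ valid for all $N$, not merely asymptotically. From the structure of extremal polyforms in \cite{isoperimetric}, I expect an inequality of the form $\Pmin(N)\ge (p-2-\tfrac{2}{\beta})N+c$ with a suitable constant $c$. The natural source is the fact that the ratio $\Pmin(N)/N$ decreases toward its limit \eqref{eqn:isoperimetric} from above. One could also argue via the layer structure of $A_{p,q}(k)$: the number of tiles in successive layers grows by the factor $\beta$.

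Substituting this bound gives
\[
ph \le n(p-2)+2-\Bigl(p-2-\tfrac{2}{\beta}\Bigr)(n+h)-c,
\]
that is,
\[
\Bigl(p+p-2-\tfrac{2}{\beta}\Bigr)h \le \tfrac{2}{\beta}\,n + 2 - c.
\]
Solving for $n$ yields $n\ge \beta\bigl(p-1-\tfrac1\beta\bigr)h + O(1)$. The additive constant must then be absorbed. For this I would use $h\ge 2$ together with Lemma \ref{lem:h2}, which gives $n\ge p(q-2)+2-h$, and possibly a slightly sharpened form of the linear bound on $\Pmin$ (for instance $\Pmin(N)\ge (p-2-\tfrac2\beta)N+2$).

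I expect this to be the main obstacle. One needs a clean, non-asymptotic linear lower bound for $\Pmin$ with a constant good enough that the stated inequality holds for every $h\ge 2$, not only for large $h$. If the constant causes trouble for small $h$, I would handle the small cases separately via Lemma \ref{lem:h2} and the explicit formula \eqref{eqn:P_min3}.

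\textbf{Upper bound.} I would build a chain of $h$ copies of a one-hole gadget. Take $B=B_{p,q}(1)$, which has $p+(p-1)(q-3)$ tiles and one hole whose boundary ring consists of $p$ tiles. Each further hole is added by gluing a new ring around a tile adjacent to the previous ring. The new ring shares one of its boundary tiles with existing material, so each additional hole costs $(p-1)+(p-1)(q-3)=(p-1)(q-2)$ tiles, plus one connector tile, for a total of at most $(p-1)(q-2)+1$ per hole. Summing gives at most $h((p-1)(q-2)+1)$ tiles overall, since $g(1)\le (p-1)(q-2)+1$.

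The delicate point in this half is showing that successive gadgets do not overlap or accidentally create or merge holes. For this I would use Lemma \ref{parallel tile edges}: geodesics through nonadjacent edges of a tile are ultraparallel, so gadgets attached across opposite sides of a connecting tile lie in disjoint half-planes. The construction can therefore proceed linearly along a geodesic direction, and the holes stay distinct and bounded.
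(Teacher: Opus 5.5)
Your lower bound is the paper's argument: apply Lemma~\ref{lem:M(n,h)} at $n=g(h)$ and insert $\Pmin(N)>(p-2-\frac{2}{\beta})N+2$, which \cite{isoperimetric} provides for $N>p(q-2)+1$. Lemma~\ref{lem:h2} is used only to guarantee $n+h>p(q-2)+1$, so that this estimate applies. No additive constant can be ``absorbed'', because the target inequality has no slack. You need $c\ge 2$ exactly, so that it cancels the $+2$ in $M(n,h)$. A monotone-ratio argument would at best give $c=0$, which leaves a deficit of $\beta$ in the final bound.

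The genuine gap is in the upper bound. Your separation claim is false as stated. Consecutive rings both contain the connecting tile $Y$, which lies in the strip between the two ultraparallel geodesics, so the gadgets are not in disjoint half-planes. Lemma~\ref{parallel tile edges} only separates the hole tiles $X_1,X_2$. That shows neither hole lies in the other's ring. It says nothing about later rings meeting earlier holes, or about overlapping rings enclosing additional bounded components of the complement. An upper bound on $g(h)$ requires a polyform with exactly $h$ holes.

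The theorem also covers $p=3$ (e.g.\ $\{3,7\}$, $\{3,8\}$), where a triangle has no nonadjacent edges, so Lemma~\ref{parallel tile edges} is unavailable. Two holes adjacent to a common triangle share a vertex $v$. The ring around either one then contains the other unless its skipped vertex is exactly $v$, and your proposal neither notices nor arranges this.

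The paper avoids all of this by never overlapping gadgets. It locates two ultraparallel perimeter edges $l,m$ of $B_{p,q}(1)$. When $p>3$ these are obtained by reflecting two nonadjacent edges of the hole across each other, using Lemma~\ref{parallel tile edges}. When $p=3$ they are opposite sides of the quadrilateral formed by two adjacent triangles, using Lemma~\ref{parallel tile edgesp3}. It then repeatedly reflects the whole of $B_{p,q}(1)$ across these edge-geodesics. Reflections are symmetries of the tessellation, and the copies occupy pairwise disjoint strips glued along boundary edges. This yields exactly $h$ holes and exactly $h((p-1)(q-2)+1)$ tiles.
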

    
    \begin{proof}
First, we prove the lower bound. Using Lemma \ref{lem:M(n,h)}, we have:
\begin{align*}
h \leq M(n,h) = \frac{n(p-2) + 2 -   \Pmin^{p,q}(n+h)}{p}.
\end{align*}
Since $g(h)$ is by definition the smallest number of tiles $n$ such that a polyform with $h$ holes exists, we may substitute $n = g(h)$:
\begin{align} \label{eqn:h<M}
h \leq \frac{g(h)(p-2) + 2 -   \Pmin^{p,q}(g(h)+h)}{p}.
\end{align}

From the isoperimetric formula in  \cite[Theorems 1 and 2]{isoperimetric} we have for $N>p(q-2)+1$ that
\begin{align}\label{eqn:Pmin}
\Pmin^{p,q}(N) =  \left(p - 2 - \frac{2}{\beta}\right) N + \epsilon(N) >   \left(p - 2 - \frac{2}{\beta}\right) N+2 .
\end{align}

By Lemma \ref{lem:h2}, we have $n+h>p(q-2)+1$. Therefore, we can apply the bound \ref{eqn:Pmin} on \ref{eqn:h<M} to get:
\begin{align*}
h \leq \frac{g(h)(p-2) + 2 - \left(p - 2 - \frac{2}{\beta}\right) (g(h)+h)-2}{p}.
\end{align*}
\\
Through basic algebraic manipulations of the above, we get
\begin{align*}
   hp &\leq g(h)(p-2) - \left(p-2-\frac{2}{\beta}\right)(g(h)+h),\\
   hp &\leq g(h)(p-2) - g(h)\left(p-2-\frac{2}{\beta}\right) - h\left(p-2-\frac{2}{\beta}\right),\\
   hp + h\left(p-2-\frac{2}{\beta}\right) &\leq g(h)\left(p-2 - \left(p-2-\frac{2}{\beta}\right)\right),\\
   2h\left(p-1-\frac{1}{\beta}\right) &\leq \frac{2g(h)}{\beta}.
\end{align*}
Because $\beta$ is strictly positive, we can divide through by $\frac{2}{\beta}$ to get
\begin{align*}
\beta\left(p-1-\frac{1}{\beta}\right)h \leq g(h).
\end{align*}

To prove the upper bound of $h((p-1)(q-2) + 1)$ we construct a family of sequences of $\{p,q\}$-polyforms with $h$ holes, which we label $B_{p,q}(h)$. 

We first create $B_{p,q}(1)$ by selecting some tile $X$ in the $\{p,q\}$-tessellation as a hole, selecting one vertex of $X$ and including all tiles on each other vertex of $X$ into $B_{p,q}(1)$, as shown in Figure \ref{fig:stack} below. This creates a connected figure with one hole. Since all tiles adjacent to $X$ share two vertices with it, and including the tiles on the vertex shared by any two tiles adjacent to $X$ connects them. Thus, this creates a polyform with $p + (p-1)(q-3) = (p-1)(q-2) + 1$ tiles. To create $B_{p,q}(h)$, we select two ultraparallel edges of $B_{p,q}(1)$ and reflect along them to create copies of it across these edges. The rest of the proof explains this construction in detail.

If $p > 3$, let $l_1, l_2$ be two nonadjacent edges of $X$ (this is possible because $X$ has at least 4 edges); by Lemma \ref{parallel tile edges}, they are ultra-parallel. Moreover, they divide the hyperbolic plane into 3 regions: One strip between $l_1$ and $l_2$, where $X$ lies; and two half-planes $D_1$ and $D_2$, each having $l_1$ and $l_2$ as boundaries, respectively. Let $l$ be the reflection of $l_1$ across $l_2$ and let $m$ be the reflection of $l_2$ across $l_1$. By construction, $l$ and $m$ are edges of tiles adyacent to $X$. Moreover, $l$ and $m$ are ultraparallel, as they are separated by the strip between $l_1$ and $l_2$.This gives us a pair of ultraparallel edges on the outer perimeter of $B_{p,q}(1)$. See figure \ref{fig:stack}.

When $p = 3$, let $Y_1$ and $Y_2$ be tiles adjacent to $X$. Consider the quadrilateral formed by $X$ and $Y_1$, let $l_1$ and $l_2$ be opposite edges of this quadrilateral such that they are not on the outer perimeter of $B_{p,q}(1)$ and with $l_1$ edge of $Y_1$, $l_2$ edge of $X$. By Lemma \ref{parallel tile edgesp3}, $l_1$ and $l_2$ are ultraparallel. As before, they split the hyperbolic plane in 3 regions, the strip where $X$ lies and two half-planes $D_1$ and $D_2$. Without loss of generality, $Y_2$ is in $D_2$ and by connectivity, there is another tile of $B_{p,q}(1)$ adjacent to $Y_2$, let $l$ be the edge opposite to $l_2$ in the quadrilateral formed by $Y_2$ and this adjacent tile. Note that $l_2$ lies entirely in $D_2$, since $l$ and $l_2$ are ultraparallel. Similarly, there are another 2 tiles on $D_1\cap B_{p,q}(1)$, one of them connected to $Y_1$ along $l_1$. These 2 tiles form another quadrilateral; let $m$ be the edge opposite to $l_1$. Note that $m$ lies entirely on $D_1$. Finally, $l$ and $m$ the desired ultraparallel edges on the outer perimeter of $B_{p,q}(1)$. See figure \ref{fig:stack}.

To create $B_{p,q}(2)$, we simply select one of these parallel edges and include the image of $B_{p,q}(1)$ reflected across the geodesic through this edge into the polyform. Since we are reflecting across an edge of a tile the polyform remains connected, and there is no self intersection because the new copy lies entirely on one side of the geodesic through this edge. By repeating this process, we can create a polyform $B_{p,q}(h)$ with exactly $h(p-1)(q-2) + h$ tiles since each copy of $B_{p,q}(1)$ is a self contained structure with one hole and $(p-1)(q-2) + 1$ tiles. The shape $B_{p,q}(h)$ does not self-intersect as the copies of $B_{p,q}(1)$ lie in disjoint strips bounded by two ultraparallel lines. Moreover, adjacency along boundary edges guarantees connectivity.

\begin{figure}[htbp]
    \centering
    \includegraphics[width=0.45\linewidth]{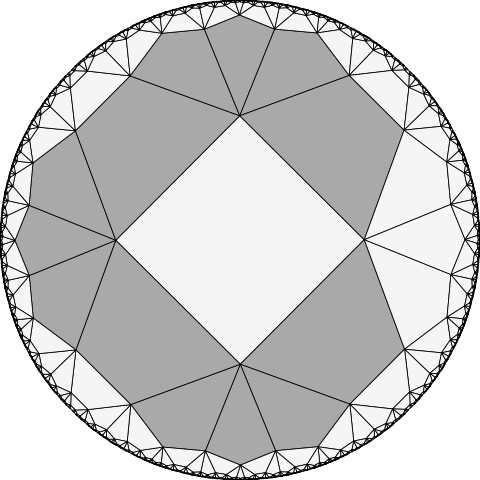}
    \includegraphics[width=0.45\linewidth]{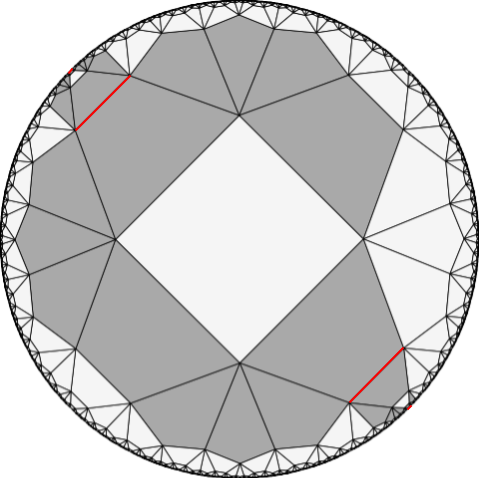}
    \includegraphics[width=0.45\linewidth]{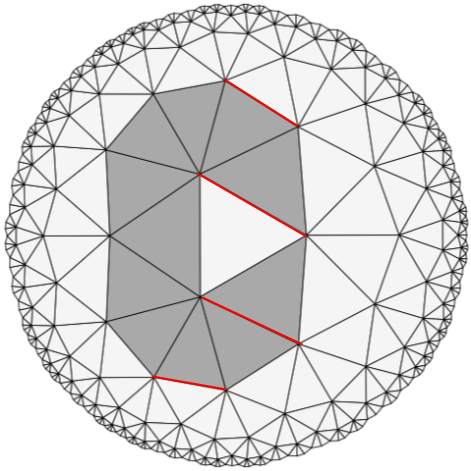}
    \includegraphics[width=0.45\linewidth]{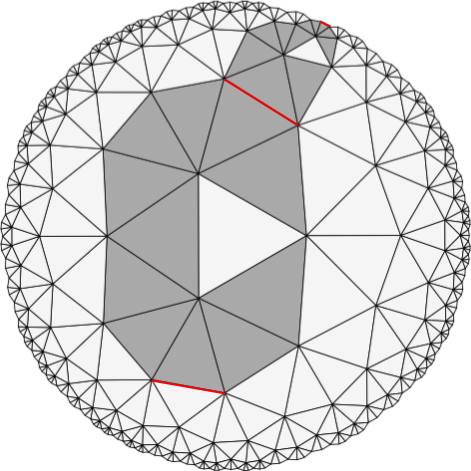}
    \caption{Examples of the polyforms $B_{p,q}(h)$ used in the proof of Theorem~1.6.
The highlighted edges are ultraparallel and serve as reflection axes in the construction
of polyforms with an increasing number of holes.}
    \label{fig:stack}
\end{figure}
\end{proof}
As mentioned in the introduction, these bounds are surprisingly close to each other. 
\begin{cor}
For $p,q>3$,
\begin{align}
    (p-1)(q-3) -1 \leq \frac{g(h)}{h} \leq (p-1)(q-2) + 1.
\end{align}    
\end{cor}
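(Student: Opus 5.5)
The corollary should follow by dividing the two inequalities of Theorem~\ref{thm:bounds} by $h$ and replacing the lower constant $\beta\left(p-1-\frac{1}{\beta}\right)=\beta(p-1)-1$ with an explicit integer expression. For $h\ge 2$ the upper bound $g(h)/h\le (p-1)(q-2)+1$ comes directly from Theorem~\ref{thm:bounds}. For $h=1$ it follows from the first part of Theorem~\ref{thm:table}, since $g(1)=(p-1)(q-2)+1$ exactly, as noted in the remark after that proof. So the work is entirely on the lower bound.

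For the lower bound with $h\ge 2$, Theorem~\ref{thm:bounds} gives $g(h)/h\ge \beta(p-1)-1$. I would invoke the remark quoting \cite[Lemma 14]{isoperimetric}: for $p>3$ we have $\lfloor\beta\rfloor=q-3$, hence $\beta\ge q-3$. Since $p-1>0$, this gives $\beta(p-1)-1\ge (p-1)(q-3)-1$, which is the claimed bound. The hypothesis $q>3$ needed in Theorem~\ref{thm:bounds} is supplied by the assumption $p,q>3$.

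For $h=1$ I would check the lower bound directly: $g(1)=(p-1)(q-2)+1\ge (p-1)(q-3)-1$, which is trivial.

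The only step needing care is that the corollary should be read for all $h\ge1$, with $h=1$ handled through Theorem~\ref{thm:table} rather than Theorem~\ref{thm:bounds}. Relying on the cited value $\lfloor\beta\rfloor=q-3$ is the substantive input. If one preferred to verify it independently, set $f(x)=(p-2)x^2-(p-2)(q-2)x+(q-2)$, whose larger root is $\beta$. Then $f(q-3)=-(p-2)(q-3)+(q-2)=(q-2)-(p-2)(q-3)$. This is negative when $p\ge4$ and $q\ge4$, because then $(p-2)(q-3)\ge 2(q-3)=q-2+(q-4)\ge q-2$, with equality excluded by the hyperbolicity condition $(p-2)(q-2)>4$ in the boundary case $p=q=4$. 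Since $f(q-3)<0$, the point $q-3$ lies between the two roots, so $\beta>q-3$, which is all that is required.
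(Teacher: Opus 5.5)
Your proposal is correct and follows the paper's proof. Like the paper, you divide the bounds of Theorem~\ref{thm:bounds} by $h$ and use $\beta\ge q-3$ (from $\lfloor\beta\rfloor=q-3$, \cite[Lemma 14]{isoperimetric}) to replace $\beta(p-1)-1$ by $(p-1)(q-3)-1$; your separate treatment of $h=1$ via $g(1)=(p-1)(q-2)+1$ and your direct check that $f(q-3)=(q-2)-(p-2)(q-3)<0$ forces $\beta>q-3$ are both valid and make the argument self-contained.
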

\begin{proof}
    It follows directly from Theorem \ref{thm:bounds} and the fact that $\lfloor \beta \rfloor =q-3$ . See Lemma 14 in \cite{isoperimetric}.
 \end{proof}

\section{The case $p$ even and $q=3$}  \label{section:q3}

In this section, we conjecture a formula for $g(h)$ when $p=2k$ and $q=3$. We provide examples of $\{2k,3\}$-polyforms with $h$ holes for $h\ge 1$. Based on the construction, these polyforms are expected to be crystallized; however a proof has so far eluded us.  
 
Let $k> 3$ be an integer and let $\vmin^{2k,k}(n)$ be the minimum number of vertices for a $\{2k,k\}$-polyform with $n$ tiles. We show that  
\begin{align}
    g_{2k,3}(h) \le \vmin^{2k,k}(h).
\end{align}
This is achieved by constructing a $\{2k,3\}$-polyform by duality using the vertices of an extremal $\{2k,k\}$-polyform. 

We start by realizing the $\{2k,k\}$-tessellation as a subgraph of the $\{3,2k\}$-tessellation as follows: For each tile in the $\{2k,k\}$-tessellation, we add its center as a vertex and add edges between this center and every vertex of the tile. This creates a regular $\{3,2k\}$-tessellation since all triangles have interior angles equal to $\pi/k$. The vertices of the $\{3,2k\}$-tessellation are colored gray if they were originally in the $\{2k,k\}$-tessellation, and the vertices are colored white if they are added centers. 

Finally, in the $\{2k,3\}$-tessellation, we regard as holes those tiles that correspond, by duality with the $\{3, 2k\}$-tessellation, to the vertices that were the centers of the $\{2k,k\} $-tiles. The final result is called the \textit{punctured} $\{2k,3\}$-tessellation. See Figure \ref{fig:q3-tess}.
The puncturing process can be restricted to a $\{2k,k\}$-polyform with $h$ tiles to create $\{2k,3\}$-polyforms with $h$ holes.

\begin{figure}[htbp] 
    \centering
    \includegraphics[height=180px, width=.4\linewidth]{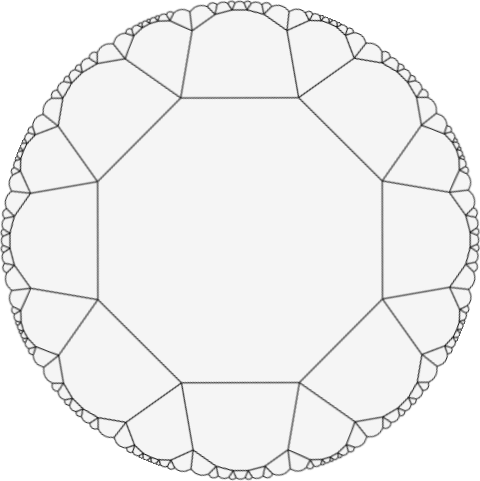} 
    \includegraphics[height=180px, width=.4\linewidth]{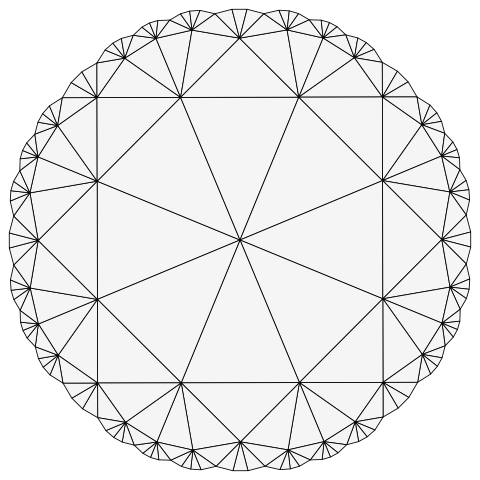}    
    \includegraphics[height=180px, width=.4\linewidth]{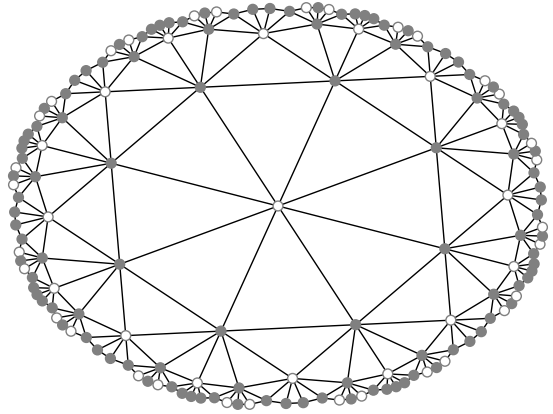} 
    \includegraphics[height=180px, width=.4\linewidth]{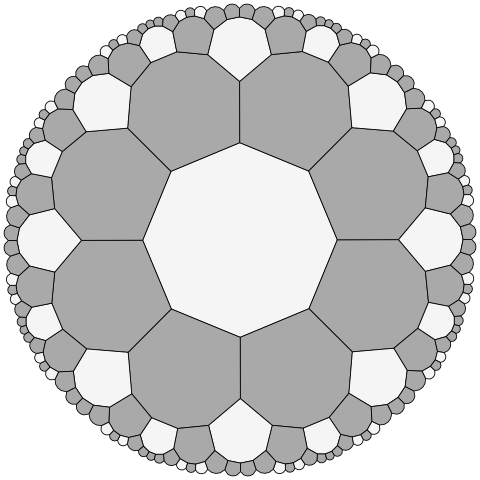}
    \caption{The puncturing process. \textit{Top left.} The $\{2k,k\}$-tessellation.  \textit{Top right.}  For each tile in the $\{2k,k\}$-tessellation, its center is added as a vertex, and edges are added between this center and the vertices of the tile, resulting in the $\{3, 2k\}$-tessellation. \textit{Bottom left.} The vertices of the $\{3, 2k\}$-tessellation are colored gray and white. The subgraph of gray vertices is the original $\{2k, k\}$-tessellation; the white vertices are the added centers. \textit{Bottom right.} The dual of $\{3, 2k\}$ is the $\{2k,3\}$-tessellation with an associated coloring of tiles. }
    \label{fig:q3-tess}
\end{figure}

\begin{defn}
Let $A$ be a $\{2k,k\}$-polyform. Consider it as a subgraph of the $\{3,2k\}$-tessellation as with the puncturing process and let $\tilde{A}$ be the set of tiles in the $\{2k,3\}$-tessellation such that each vertex of $A$ corresponds to a tile in $\tilde{A}$. We call  $\tilde{A}$ the \textit{punctured polyform} with respect to $A$. See Figure \ref{fig:q3}. 
\end{defn}

\begin{figure}[htbp] 
    \centering
    \includegraphics[height=180px, width=.4\linewidth]{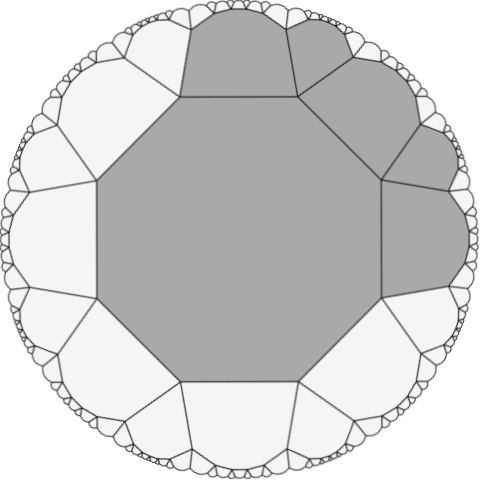} 
    \includegraphics[height=180px, width=.4\linewidth]{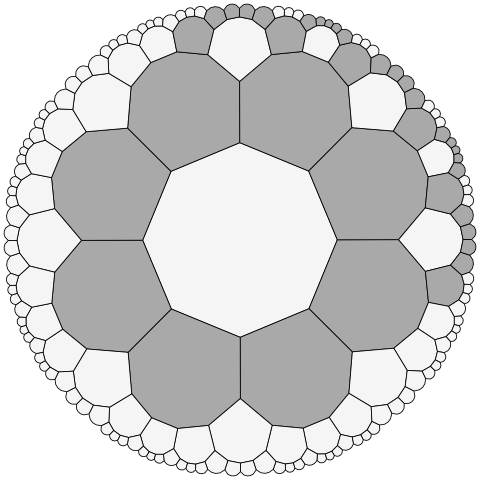} 
    \caption{\textit{Left.} An extremal polyform with 6 tiles in the $\{2k,k\}$-tessellation when $k=4$. \textit{Right.} The corresponding punctured $\{2k,3\}$-polyform with 6 holes.}
    \label{fig:q3}
\end{figure}

\begin{prop} \label{prop:Atilde}
Let $A$ be a $\{2k,k\}$-polyform with $h$ tiles and $v(A)$ vertices. Then $\tilde{A}$ is a $\{2k,3\}$-polyform with $h$ holes and $v(A)$ tiles.
\end{prop}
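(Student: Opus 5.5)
The tile count is immediate from the definition. By construction the tiles of $\tilde{A}$ are in bijection with the vertices of $A$, since each vertex of $A$ is a gray vertex of the $\{3,2k\}$-tessellation and is dual to exactly one $\{2k,3\}$-tile. So $|\tilde A| = v(A)$. The substance of the proof is connectivity and the hole count, and I would organize everything through duality between the $\{3,2k\}$- and $\{2k,3\}$-tessellations. Two $\{2k,3\}$-tiles share an edge exactly when the corresponding $\{3,2k\}$-vertices are joined by an edge.

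\emph{Connectivity.} The edges of the $\{2k,k\}$-tessellation are exactly the gray--gray edges of the $\{3,2k\}$-tessellation. Since $A$ is a polyform, its tiles are edge-connected, so the 1-skeleton of $A$ (its vertices and edges) is a connected graph. Each edge of this skeleton joins two gray vertices of $A$. Its dual is therefore an edge shared by the two corresponding tiles of $\tilde A$. Hence the dual graph of $\tilde A$ contains a copy of the 1-skeleton of $A$, and it is connected.

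\emph{Each tile of $A$ yields a hole.} Let $T$ be a tile of $A$ with center $c$, a white vertex of degree $2k$, and let $X_T$ be the $\{2k,3\}$-tile dual to $c$. The $2k$ tiles edge-adjacent to $X_T$ are dual to the $2k$ neighbors of $c$, which are exactly the vertices of $T$, so all of them lie in $\tilde A$. Each vertex of $X_T$ meets only three tiles: $X_T$ and two of these neighbors. Thus the whole boundary of $X_T$ is covered by $\tilde A$, while $X_T\notin\tilde A$ because white vertices are never included. It follows that $X_T$ is by itself a bounded component of the complement. Distinct tiles $T$ give distinct tiles $X_T$, so we obtain at least $h$ holes.

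\emph{No other holes.} This is the step I expect to be the main obstacle. The plan is to show that $\tilde A\cup\bigcup_T X_T$ has no bounded complementary component. Let $K$ be the subcomplex of the $\{3,2k\}$-tessellation obtained by coning each tile of $A$ to its center. Then $K$ is homeomorphic to the closed region $A$. The set $\tilde A\cup\bigcup_T X_T$ is the union of the dual $\{2k,3\}$-cells of the vertices of $K$, that is, a closed regular (star) neighbourhood of $K$. It therefore deformation retracts onto $K$, and by Alexander duality it has the same number of bounded complementary components as $A$ itself. If $A$ is simply connected (no holes as a $\{2k,k\}$-polyform), there are none. Consequently, every complementary component of $\tilde A$ other than the $X_T$ is unbounded, and $\tilde A$ has exactly $h$ holes.

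Two points need care. First, tiles of $A$ meeting only at a vertex must not break the regular-neighbourhood claim. I would check this locally at such a vertex, where the dual cells still form a disc sector around it. Second, if $A$ itself has holes, each of them produces additional holes of $\tilde A$. So I would state and prove the proposition under the implicit assumption that $A$ is hole-free, which holds for the extremal polyforms $S_{2k,k}(h)$ to which it will be applied. Alternatively, the count becomes $h$ plus the number of holes of $A$.
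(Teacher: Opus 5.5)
Your tile count, your connectivity argument, and your proof that each tile $T$ of $A$ yields the one-tile hole $X_T$ are all fine. They are also more careful than the paper, whose proof notes that tile centers give holes and then simply asserts $h(\tilde A)=|A|$. The gap is in the ``no other holes'' step. The union of the dual $\{2k,3\}$-cells of the vertices of $K$ is the derived neighbourhood of $K$ in the $\{3,2k\}$-triangulation $L$. That set is a regular neighbourhood of $K$ when $K$ is a \emph{full} subcomplex of $L$, but not in general.

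Fullness fails exactly when some edge $e=uw$ of the $\{2k,k\}$-tessellation lies outside $A$ while $u,w$ are both vertices of $A$. In that case $X_u$ and $X_w$ share the edge dual to $e$, and the thickening acquires a new cycle. This happens even when $A$ has no holes, so restricting to hole-free $A$ does not save the claim. Concretely, take a tile $Y_1$, an edge $e=uw$ of $Y_1$, and the tile $Y_2$ across $e$. Let $A$ be the set of all tiles sharing a vertex with $Y_1$, other than $Y_1$ and $Y_2$. This $A$ is a polyform (a ring of tiles with one tile removed) and has no holes, because $Y_1$ reaches the unbounded region through the open edge $e$. But every vertex of $Y_1$ is a vertex of $A$. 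Hence all $2k$ neighbours of the tile dual to the center of $Y_1$ lie in $\tilde A$, and that tile is an extra hole, giving $h(\tilde A)=|A|+1$. For the same reason your fallback count, ``$h$ plus the number of holes of $A$'', is also wrong in general.

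The argument can be repaired. Replace $K$ by the full subcomplex $\hat K$ spanned by its vertices, namely $K$ together with every $\{2k,k\}$-edge joining two vertices of $A$. No new triangles are needed: a triangle of $L$ whose white vertex lies in $K$ already belongs to $K$. The union of the dual cells is then a genuine regular neighbourhood of $\hat K$, and your Alexander-duality step gives $h(\tilde A)=|A|+h(\hat K)$. So the proposition needs a hypothesis, for instance that $A$ has no holes and no edge outside $A$ joins two vertices of $A$. To apply it in Theorem~\ref{thm:q3}, you must then verify this hypothesis for the extremal (spiral) polyforms $S_{2k,k}(h)$. Neither the paper's statement nor its proof addresses this.
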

\begin{proof}
Note that placing a vertex in the center of every face of the $\{2k,k\}$-tessellation and adding an edge between the center and exterior vertices results in a $\{3,2k\}$-tessellation because one additional edge is added between each edge of the original tessellation. When we take the dual, this becomes a $\{3,2k\}$-tessellation.

Every tile $X$ in $\tilde{A}$ corresponds to a vertex $X'$ of a tile in $A$. The incidence relations of the tiles in $\tilde{A}$ correspond to the edge connections of the vertices in $A$. The vertices of a tile are connected, so if two tiles $X_1,X_2$ have corresponding $X_1',X_2'$ on the same tile in $A$, they are connected. If two tiles are incident in $A$, they must share some vertices. Connectedness is a transitive property; so if two tiles share vertices, every vertex in both tiles is connected to every other vertex. Because $A$ is connected, $\tilde{A}$ must also be connected.

Now, we focus on the graph parameters of $\tilde{A}$. The center of each tile in $A$ corresponds to a hole in $\tilde{A}$; therefore, $h(\tilde{A})=|A|=h$. Finally, each vertex of $A$ corresponds to a tile in $\tilde{A}$, that is, $\tilde{A}$ has $v(A)$ tiles.
\end{proof}

We are now ready to provide an upper bound for $g_{2k,3}(h)$ using the construction of $\tilde{A}$ when $A$ is an extremal polyform, thus proving Theorem \ref{thm:q3} which we restate here for the reader's convenience. 

\begin{thm*}[Theorem \ref{thm:q3}]
For $k>3$ an integer, 
    \begin{align}
        g_{2k,3}(h) \leq  \frac{\Pmin^{2k,k}(h)+(2k-2)h}{2}+1.
    \end{align}  
\end{thm*}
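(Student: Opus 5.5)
Take $A$ to be an extremal $\{2k,k\}$-polyform with $h$ tiles, for instance the spiral $S_{2k,k}(h)$, so that its perimeter is $\Pmin^{2k,k}(h)$. By Proposition~\ref{prop:Atilde}, the punctured polyform $\tilde{A}$ is a $\{2k,3\}$-polyform with exactly $h$ holes and $v(A)$ tiles. Hence $g_{2k,3}(h)\le v(A)$, and the whole task reduces to computing the number of vertices of $A$ from its number of tiles and its perimeter with an Euler characteristic count.

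For the count, regard $\overline{A}$ as a planar cell complex with $V=v(A)$ vertices, $E$ edges and $F=|\overline{A}|$ faces. Let $b$ be the number of internal edges, that is, edges of the dual graph, and let $P$ be the total perimeter. Double counting edge–tile incidences gives $2kh = 2b + P$, and $E = b + P$. The cleanest case is when $A$ is simply connected with a disk interior, which is the case for spiral extremal polyforms: their boundary is a single simple closed curve. Then Euler's formula $V - E + h = 1$ yields
\[
V = 1 + b + P - h = 1 + \frac{2kh - P}{2} + P - h = \frac{P + (2k-2)h}{2} + 1 .
\]
Substituting $P=\Pmin^{2k,k}(h)$ gives exactly the bound in Theorem~\ref{thm:q3}.

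The main obstacle is justifying the topological hypotheses behind this Euler count. We need $A$ to have no holes and its boundary to be a simple cycle, with no pinch vertices where tiles meet only at a corner. A pinch vertex would break the relation $V-E+F=1$ as applied, or would be counted differently along the perimeter. I would handle this by choosing $A=S_{2k,k}(h)$ and invoking the structural description of spirals from \cite{MaRoTo}: spirals are built by successively adding tiles along one or two edges, so they stay homeomorphic to a closed disk. An alternative route is to prove the formula inductively along the spiral construction. Adding a tile along one edge adds $2k-2$ new vertices and changes the perimeter by $2k-2$. Adding it along two consecutive edges adds $2k-3$ vertices and changes the perimeter by $2k-4$. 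In both cases the quantity $V-\frac{P+(2k-2)h}{2}$ stays constant and equals $1$ for a single tile.

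I would also need to confirm the statement underlying Proposition~\ref{prop:Atilde}: distinct vertices of $A$ give distinct tiles of $\tilde{A}$, and the $h$ center-tiles are genuine bounded complementary components. Since this is already asserted in Proposition~\ref{prop:Atilde}, I will take it as given.
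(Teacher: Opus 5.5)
Your proposal is correct and follows essentially the same route as the paper. You take an extremal (spiral) $\{2k,k\}$-polyform $A$ with $h$ tiles, use Proposition~\ref{prop:Atilde} to get $g_{2k,3}(h)\le v(A)$, and combine Euler's formula with the edge count $2kh=2b+P$ to get $v(A)=\frac{\Pmin^{2k,k}(h)+(2k-2)h}{2}+1$. The extra topological care about pinch vertices is not needed: the relation $v(A)-e(A)+|A|+h(A)=1$ holds for any polyform, since pinch-enclosed regions count as holes, so the hypothesis that $A$ has no holes suffices, and that is all the paper uses.
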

\begin{proof} 
Let $A$ be a $\{p,q\}$-polyform with $h$ tiles (and no holes), $v$ vertices, $e$ edges, and $p_o$ perimeter edges. By Euler's formula, we have $v-e+h=1$. Counting the edges of all tiles in $A$, allowing for duplication of shared edges, gives $2e-p_o = p\cdot h$. Combining these two relations yields  $v=\frac{p_o+(p-2)h}{2}+1$. This shows that, for a fixed number of tiles $h$, minimizing $p_o$ is equivalent to minimizing $v$. 

Recall that a polyform $A$ is extremal if its perimeter $p_o$ is the smallest among all polyforms with $h$ tiles. By the previous observation, $A$ also minimizes $v$. This justifies the use of the following notation for extremal polyforms with $h$ tiles: $\vmin^{p,q}(h)=v$ and $\Pmin^{p,q}(h)=p_o$. The equation $v=\frac{p_o+(p-2)h}{2}+1$ then translates to:
    \begin{align}
   \vmin^{p,q}(h) = \frac{\Pmin^{p,q}(h)+(p-2)h}{2}+1.     
    \end{align}    

Let $A$ be an extremal $\{2k,k\}$-polyform with $h$ tiles; for example, the spiral polyform from \cite{MaRoTo}; see Definition 4.1 and Theorem 1.8 therein. Using Proposition \ref{prop:Atilde}, we can construct $\tilde{A}$, a $\{2k,3\}$-polyform with $\vmin^{2k,k}(h)$ tiles and $h$ holes. This construction proves:
    \begin{align*}
    g_{2k,3}(h) \le \vmin^{2k,k}(h)    
    \end{align*}
It immediately follows that:
\begin{align*}
    g_{2k,3}(h) \le \frac{\Pmin^{2k,k}(h)+(2k-2)h}{2}+1.
\end{align*}
\end{proof}

\begin{conj} \label{conj:q3}
If $A$ is an extremal $\{2k,k\}$-polyform, then $\tilde{A}$ is crystallized. That is, 
\begin{align} \label{eqn:g3}
    g_{2k,3}(h) = \frac{\Pmin^{2k,k}(h)+(2k-2)h}{2}+1.
\end{align}
\end{conj}
If this conjecture holds, then we get the asymptotic behavior of $g_{2k,3}$. Recall the asymptotic behavior of $\Pmin$:
\begin{align*}
    \lim_{n\to \infty} \frac{\Pmin^{p,q}(n)}{n} = p-2-\frac{2}{\beta}.
\end{align*}
For the case $p=2k$ and $q=k$, we have:
\begin{align*}   
    \beta & =\frac{(2k-2)(k-2)+\sqrt{(2k-2)^2(k-2)^2-4(2k-2)(k-2)}}{2(2k-2)}.
\end{align*} 
Then, 
\begin{align*}   
      \frac{1}{\beta}  &= \frac{2}{k-2+\sqrt{(k-2)^2-4\frac{k-2}{2k-2}}}, \\
                  &= \frac{2\left(k-2 - \sqrt{(k-2)^2-4\frac{k-2}{2k-2}}\right)}{4\frac{k-2}{2k-2}}, \\
        &= k-1 -\sqrt{(k-1)^2-\frac{2k-2}{k-2}}.      
\end{align*} 
Substituting into \ref{eqn:g3} yields:
\begin{align*}
    \lim_{n\to \infty} \frac{g_{2k,3}(h)}{h} &= \frac{2k-2-2/\beta+2k-2}{2}, \\
                    &= 2k-2-\frac{1}{\beta}, \\
                    &=  k-1+ \sqrt{(k-1)^2-\frac{2k-2}{k-2}}.      
    \end{align*}

\section{Conclusions and further directions}

A key ingredient in our approach is the quantity $M(n,h)$, which yields an algebraic obstruction to the existence of $\{p,q\}$--polyforms with $n$ tiles and $h$ holes: whenever $h > M(n,h)$, no such polyform can exist. This obstruction is purely analytic and depends only on the isoperimetric profile $\mathcal{P}^{p,q}_{\min}$.

Evaluating the inequality $h \le M(n,h)$ at successive values of $n$ provides a natural hierarchy of obstructions. In particular, computing $M(n-k,h)$ for small integers $k\ge 1$ allows one to rule out entire ranges of tile counts. For example, the condition $M(n-2,h) < h$ rigorously excludes the existence of $\{p,q\}$--polyforms with $n-2$ tiles and $h$ holes, while $M(n-1,h) \ge h$ indicates that the algebraic obstruction alone does not rule out the case $n-1$. This phenomenon creates a narrow ``ambiguity window'' in which minimality cannot be decided by the $M$--bound alone.

The tables in Section~\ref{section:bounds} illustrate how this multi--step obstruction can be used in practice to certify minimality, to identify crystallized cases, and to isolate parameter regimes where additional geometric or combinatorial arguments are required. More generally, this suggests the following directions for future work.

\begin{itemize}
    \item Determine for which hyperbolic tessellations $\{p,q\}$ the algebraic threshold
    \[
        n_{\mathrm{alg}}(h) := \min\{n \in \mathbb{N} : h \le M(n,h)\}
    \]
    coincides with the true minimum $g_{p,q}(h)$ for all sufficiently large $h$.

    \item Investigate whether the difference $g_{p,q}(h) - n_{\mathrm{alg}}(h)$ remains uniformly bounded, or whether it exhibits structured behaviour depending on $p$, $q$, or congruence classes of $h$.

    \item Develop geometric or constructive criteria that resolve the ambiguity windows detected by inequalities of the form
    \[
        M(n-k,h) < h \le M(n-k+1,h),
    \]
    thereby bridging the gap between algebraic obstructions and explicit realizations.
\end{itemize}

Taken together, these questions suggest that the quantities $M(n-k,h)$ provide a flexible framework for systematically excluding large classes of parameter tuples $(p,q,n,h)$, and for guiding the search for new extremal or near--extremal hyperbolic polyforms.

The bounds on $g(h)$ given in Theorem \ref{thm:bounds} are not sharp. Our construction of holey polyforms given in Section \ref{section:bounds} was  simple due to the technicalities arising in hyperbolic geometry; hence it is not optimal. A general procedure (that works for all $\{p,q\}$) seems difficult to obtain, as we have not recognized any patterns. A more systematic study of examples is needed to understand the underlying structure of holes in crystallized polyforms. For example, for $p=3$, the construction of $Spir_k$ given in \cite{MaR} may be replicated in the hyperbolic context, it is not clear exactly how to do this, and if it will result in crystallized polyforms. Another approach is to follow the dual construction seen in Section \ref{section:q3} for a general $\{p,q\}$, but we may need to rely on non-regular tessellations of the hyperbolic plane and on a notion of duality for holey polyforms that captures the structure of the holes. 

As noted in Section \ref{section:table}, our search algorithm is not exhaustive. Future work will involve developing algorithmic enumeration methods in hyperbolic settings. For instance, the algorithm for generating geodesic regular tree structures (GRTS) presented in \cite{celinska2021generating} could inspire exhaustive-generation strategies, particularly for isomorphism rejection via dual-graph canonical forms.

As mentioned in conjecture \ref{conj:q3}, we believe that the bound for $g(h)$ in the $\{2k,3\}$-case is sharp. A proof will require either a characterization of crystallized polyforms (e.g. holes must have size 1) or a better understanding of the parameters of holey polyforms, for example, improved bounds on their number of vertices.

\subsection*{Acknowledgements}
We would like to thank Peter Kagey for helpful discussions and the Mason Experimental Geometry Lab at George Mason University for facilitating this project.

\subsection*{Competing Interest Statement}
The authors have no relevant financial or non-financial interests to disclose. The authors have no conflicts of interest to declare relevant to this article's content. All authors certify that they have no affiliations with or involvement in any organization or entity with any financial or non-financial interest in the subject matter or materials discussed in this manuscript. The authors have no financial or proprietary interests in any material discussed in this article.

\appendix
\section{Notation} \label{appendix}
This section summarizes the notation used in the paper. \\

Parameters of a $\{p,q\}$-polyform $A$:
\begin{itemize}
    \item $|A|, v(A), e(A), h(A)$ are the number of tiles, vertices, edges, and holes, respectively.
    \item $p_o(A), p_h(A), b(A)$ are the outer perimeter, hole perimeter, and interior edges respectively.
    \item $H(A)$ is the union of the holes in $A$. 
    \item $\overline{A}=A\cup H(A)$ is the polyform with holes filled in.    
    \item $A'$ is the dual graph of $A$. The vertices of $A'$ are tiles of $A$. An edge in $A'$ is drawn whenever the corresponding tiles in $A$ share an edge. 
\end{itemize}

Relations between graph parameters of $A$:
\begin{itemize}
    \item Euler's Formula for $A$: $v(A)-e(A) + |A|+h(A)=1$.
    \item Number of edges of $A$: $e(A)=p_o(A)+b(A)+p_h(A)$.
    \item Counting the edges of all tiles in $A$, allowing for duplication of shared edges: $p\cdot |A| = p_o(A)+ 2 b(A)+p_h(A) = e(A)+b(A) = 2e(A)-b(A)$.
\end{itemize}

Conventions for variables: 
\begin{itemize}
    \item $k$ is a positive integer indexing rings of tiles in a $\{p,q\}$-tessellation.
    \item $n$ is a positive integer referring to the number of tiles of a polyform.
    \item $\{p,q\}$ is the Schlaffli symbol of a tessellation.
    \item $\beta$ is a parameter of a $\{p,q\}$-tessellation, see Equation \ref{eqn:beta}. It appears in the asymptotic behaviour of $\Pmin^{p,q}(n)$.
    \item $A_{p,q}(k)$ is the complete $k$-layered $\{p,q\}$-polyform. Defined as follows: Let $A_{p,q}(1)$ denote the $\{p,q\}$-polyform with only one $p$-gon. Then, construct $A_{p,q}(k)$ from $A_{p,q}(k-1)$ by adding precisely the tiles needed so that all the perimeter vertices of $A_{p,q}(k-1)$ are surrounded by $q$ tiles. We call $A_{p,q}(k)$ the \emph{complete $k$-layered $\{p,q\}$-polyform}. See Definition 1.2 in \cite{MaRoTo} 
    \item $S_{p,q}(n)$ is the spiral polyform with $n$ tiles. They serve as benchmarks for the extremal values attained by polyforms, e.g., they minimize outer perimeter. See Definition 4.1 and Theorem 1.8 in \cite{MaRoTo}.
    \item $A,B,C$ are generic polyforms.
    \item $l, m$ are hyperbolic lines.
    \item  $X,Y,Z$ are tiles  
    \item $x,y,z$ are vertices 
\end{itemize}

\bibliographystyle{abbrvnat}
\bibliography{polyforms}

\end{document}